\documentclass{scrartcl}

\setkomafont{disposition}{\normalfont\bfseries}

\usepackage{latexsym,amsmath,amssymb,amsthm,amsfonts,bm}
\usepackage[active]{srcltx}     
\usepackage[all]{xy}
\usepackage{verbatim}
\usepackage{pict2e}
\usepackage{graphicx}
\usepackage{float}

\renewcommand{\epsilon}{\varepsilon}
\renewcommand{\Im}{\mathrm{Im}\,}

\newcommand{\R}{\mathbb{R}}

\newcommand{\N}{\mathbb{N}}
\newcommand{\Z}{\mathbb{Z}}
\newcommand{\C}{\mathbb{C}}
\newcommand{\h}{\mathbb{H}}

\newtheorem{theorem}{Theorem}

\newtheorem{proposition}{Proposition}

\author{Maryna S. Viazovska }
\title{The sphere packing problem in dimension 8}

\begin{document}

\maketitle

\begin{abstract}
In this paper we prove that no packing of unit balls
in Euclidean space $\R^8$ has density greater than that of the $E_8$-lattice packing.
\end{abstract}

\noindent
\textbf{ Keywords:} Sphere packing, Modular forms, Fourier analysis\\
\textbf{ AMS subject classification:}  52C17, 11F03, 11F30

\section{Introduction}
The sphere packing constant measures which portion of $d$-dimensional Euclidean
space can be covered by non-overlapping unit balls. More precisely, let $\R^d$ be the Euclidean vector space equipped with distance $\|\cdot\|$ and Lebesgue measure $\mathrm{Vol}(\cdot)$. For $x\in\R^d$ and $r\in\R_{>0}$ we denote by $B_d(x,r)$ the open ball in $\R^d$ with center $x$ and radius $r$.
Let $X\subset \R^d$ be a discrete set of points such that $\|x-y\|\geq2$ for any distinct $x,y\in X$. Then the union
$$\mathcal{P}=\bigcup_{x\in X} B_d(x,1)$$ is a \emph{sphere packing}. If $X$ is a lattice in $\R^d$ then we say that $\mathcal{P}$ is a \emph{lattice sphere packing}. The \emph{finite density} of a packing $\mathcal{P}$ is defined as
$$\Delta_{\mathcal{P}}(r):=\frac{\mathrm{Vol}(\mathcal{P}\cap B_d(0,r))}{\mathrm{Vol}(B_d(0,r))},\quad r>0.$$
We define the \emph{density} of a packing $\mathcal{P}$ as the limit superior
$$\Delta_{\mathcal{P}}:=\limsup\limits_{r\to\infty}\Delta_{\mathcal{P}}(r). $$
The number be want to know is the supremum over all possible packing densities
$$\Delta_d:=\sup\limits_{\substack{\mathcal{P}\subset\R^d\\ \scriptscriptstyle\mathrm{sphere}\;\mathrm{packing}}}\Delta_{\mathcal{P}}, $$
                         called the \emph{sphere packing constant}.

For which dimensions do we know the exact value of $\Delta_d$? Trivially, in dimension~$1$ we have $\Delta_1=1$. It has long been known  that a best packing in dimension $2$ is the familiar hexagonal lattice packing, in which each disk is touching six others. The first proof of this result was given by A. Thue at the beginning ot twentieth century \cite{Thue}. However, his proof was considered by some experts incomplete. A rigorous proof was given by L. Fejes T\'{o}th in 1940s \cite{Toth}. The density of the hexagonal lattice packing is~$\frac{\pi}{\sqrt{12}}$, therefore  $\Delta_2=\frac{\pi}{\sqrt{12}}\approx0.90690$. The packing problem in dimension~3 turned out to be more difficult. Johannes Kepler conjectured in his essay ``On the six-cornered snowflake'' (1611) that no arrangement of equally sized spheres filling space has density greater than~$\frac{\pi}{\sqrt{18}}$.  This density is attained by the face-centered cubic packing and also by uncountably many non-lattice packings. The Kepler conjecture was famously proven by T. Hales in 1998 \cite{Hales} and therefore we know that $\Delta_3=\frac{\pi}{\sqrt{18}}\approx0.74048$. In
2015 Hales and his 21 coauthors published a complete formal proof of the Kepler conjecture that can be  verified by automated proof checking software. Before now, the exact values of the sphere packing constants in all dimensions greater than~3 have been unknown. A list of conjectural best packings in dimensions less than~10 can be found in~\cite{ConwaySloan}. Upper bounds for the sphere packing constants $\Delta_d$ as $d\leq 36$ are given in~\cite{ElkiesCohn}. Surprisingly enough, these upper bounds and known lower bounds on $\Delta_d$ are extremely close in dimensions~$d=8$ and~$d=24$.

The main result of this paper is the proof that $$\Delta_8=\frac{\pi^4}{384}\approx 0.25367.$$
This is the density of the $E_8$-lattice sphere packing. Recall that the $E_8$-lattice $\Lambda_8\subset\R^8$ is given by
$$\Lambda_8=\{(x_i)\in\Z^8\cup(\Z+\textstyle\frac12\displaystyle )^8|\;\sum_{i=1}^8x_i\equiv 0\;(\mathrm{mod\;2})\}.$$
$\Lambda_8$ is the unique up to isometry positive-definite, even, unimodular lattice of rank 8. The name derives from the fact that it is the root lattice of the $E_8$ root system. The minimal distance between two points in $\Lambda_8$ is $\sqrt{2}$.
The $E_8$-lattice sphere packing is the packing of unit balls with centers at $\frac{1}{\sqrt{2}}\Lambda_8.$ Our main result is
\begin{theorem}\label{thm: main}
 No packing of unit balls
in Euclidean space $\R^8$ has density greater than that of the $E_8$-lattice packing.
\end{theorem}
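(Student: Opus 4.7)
My plan is to apply the Cohn--Elkies linear programming bound, which reduces the sphere packing problem in $\R^8$ to producing a single auxiliary function. Concretely, if $f\colon\R^8\to\R$ is a radial Schwartz function, not identically zero, satisfying $\widehat f(y)\ge 0$ for all $y$, $f(x)\le 0$ whenever $\|x\|\ge\sqrt 2$, and $f(0)=\widehat f(0)>0$, then every sphere packing of unit balls has density at most $\pi^4/384$. Numerical work of Cohn and Elkies already strongly suggested that a ``magic'' function saturating this bound exists in dimensions $8$ and $24$, so the whole task is to exhibit such an $f$ explicitly.

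\textbf{What the magic function must look like.} Tightness forces $f$ and $\widehat f$ to have double zeros at every radius $\sqrt{2n}$, $n\ge 1$, i.e.\ at the lengths of nonzero vectors in $\sqrt 2\,\Lambda_8$, with the sole exception of the shortest nontrivial length $\sqrt 2$, where a single zero plus an inequality suffices. I would split $f=f_++f_-$ into $\pm 1$ eigenfunctions of the Fourier transform and represent each as a Laplace-type integral
\[
 f_\pm(x)=\int_0^{\infty}\varphi_\pm(it)\,e^{-\pi\|x\|^2 t}\,dt
\]
against a meromorphic $\varphi_\pm$ on the upper half plane $\h$. The prescribed vanishing at $\sqrt{2n}$ then translates directly into transformation laws for $\varphi_\pm$ under elements of $\slz$; the Fourier-eigenvalue constraint becomes a specific behaviour under $\tau\mapsto -1/\tau$. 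This points to seeking $\varphi_\pm$ among weakly holomorphic modular forms (and their quasimodular companions) of weight $-8$ for a level-$2$ congruence subgroup.

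\textbf{Construction via modular and quasimodular forms.} The ring of modular forms for $\slz$ is generated by $E_4$ and $E_6$, with $E_2$ as the quasimodular generator and the discriminant $\Delta$ trivialising the cusp; at level $2$ the Jacobi theta constants $\theta_2^4,\theta_3^4,\theta_4^4$ provide further building blocks. I would construct $\varphi_-$ as an explicit rational combination of these objects chosen so that its $q$-expansion has the required positivity, and $\varphi_+$ as its quasimodular partner into which $E_2$ is inserted precisely to cancel the anomalous transformation under $\tau\mapsto -1/\tau$, so that $f_+$ ends up fixed rather than negated by the Fourier transform. The double zeros at $\sqrt{2n}$ would then be read off from the Fourier expansions, and the free constants tuned so that $f(0)=\widehat f(0)$ takes the correct positive value.

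\textbf{Main obstacle: the sign inequalities.} The genuinely hard step is proving the pointwise inequalities $\widehat f(y)\ge 0$ everywhere and $f(x)\le 0$ for $\|x\|\ge\sqrt 2$; these are not formal consequences of modularity. My approach would be to deform the contour of integration toward the cusps $0$ and $1$ of $\h$, using the modular transformation laws to rewrite $\varphi_\pm$ in coordinates in which its relevant Fourier coefficients have a manifestly definite sign, thereby reducing each inequality to a term-by-term estimate of an absolutely convergent series. Once these pointwise sign conditions and the normalisation at the origin are verified, Theorem~\ref{thm: main} follows immediately by plugging $f$ into the Cohn--Elkies bound.
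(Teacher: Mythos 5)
Your high-level strategy matches the paper's: apply the Cohn--Elkies bound, force double zeros at $\sqrt{2n}$, split into $\pm 1$ Fourier eigenfunctions, and build each eigenfunction as an integral transform of a (quasi)modular form, with $E_2$ appearing in the $+1$-eigenfunction to compensate the anomalous $z^{-4}$ from the Gaussian Fourier transform. However, two concrete points in your proposal are off.

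First, the weight. A straightforward Laplace-transform ansatz $\int_0^\infty \varphi(it)\,e^{-\pi\|x\|^2 t}\,dt$ in dimension $d=8$ forces $\varphi(-1/z)=\pm z^{2-d/2}\varphi(z)$, i.e.\ weight $2-d/2=-2$, not $-8$. Indeed the paper's $-1$ eigenfunction $b$ comes from a weakly holomorphic form $h=128\,(\theta_{00}^4+\theta_{01}^4)/\theta_{10}^8$ of weight $-2$ on $\Gamma_0(2)$, and the $+1$ eigenfunction $a$ comes from a weight-$0$ quasimodular object $\phi_0 = \varphi_{-4}E_2^2 + 2\varphi_{-2}E_2 + j - 1728$ built from level-one forms (not level $2$), paired with an explicit $z^2$ factor in the integrand. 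Also, the paper's $a$ and $b$ are first defined as sums of four contour integrals over arcs joining the cusps $-1,0,1,i\infty$ through $i$; the clean single Laplace-transform representation you posit is derived only afterward (Propositions~\ref{prop: a another integral} and~\ref{prop: b another integral}) once the Fourier-eigenfunction and double-zero properties are already in hand.

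Second, and more importantly, the sign inequalities do \emph{not} reduce to a series with coefficients of manifestly definite sign. After reducing \eqref{eqn: g1} and \eqref{eqn: g2} to showing $A(t) = -t^2\phi_0(i/t) - \tfrac{36}{\pi^2}\psi_I(it) < 0$ and $B(t) = -t^2\phi_0(i/t) + \tfrac{36}{\pi^2}\psi_I(it) > 0$ on $(0,\infty)$, one finds that the truncated expansions $A_\infty^{(6)}$, $B_0^{(6)}$, $B_\infty^{(6)}$ have terms of both signs (e.g.\ $B_0^{(6)}(t)$ alternates, and $A_\infty^{(6)}(t)$ contains a large positive linear-in-$t$ term competing with $-\tfrac{72}{\pi^2}e^{2\pi t}$). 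The paper handles this not by a term-by-term positivity argument but by bounding the Fourier coefficients via \cite[Prop.~1.12]{Bruinier}, truncating at six terms with explicit tail bounds $R_0^{(6)}, R_\infty^{(6)}$, and then verifying the resulting finite-precision inequalities by interval arithmetic on $(0,1]$ and $[1,\infty)$. Your proposal as written does not supply this step, and it is precisely the ``genuinely hard step'' you flagged; without some rigorous numerical (or otherwise nonobvious) verification that $A<0$ and $B>0$, the proof is incomplete.
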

Furthermore, our proof of Theorem~1 combined with arguments given in \cite[Section~8]{ElkiesCohn} implies that the $E_8$-lattice sphere packing is the unique periodic packing of maximal density.

The paper is organized as follows. In Section~2 we explain the idea of the proof of Theorem~\ref{thm: main} and describe the methods we use. In Section~3 we give a brief overview of the theory of modular forms. In Section~4 we construct supplementary radial functions $a$, $b:\R^8\to i\R$, which are eigenfunctions of the Fourier transform and have double zeroes at almost all points of $\Lambda_8$. This construction is crucial for our proof of Theorem~\ref{thm: main}.  Finally, in Section~5 we complete the proof.

\section{Linear programming bounds}
 Our proof of Theorem~\ref{thm: main} is based on linear programming bounds. This technique was successfully applied to obtain upper bounds in a wide range of discrete optimization problems such as error-correcting codes \cite{Delsarte72}, equal weight quadrature formulas \cite{Delsarte77}, and spherical codes \cite{KabLev, PfenderZiegler}. In  exceptional cases linear programming bounds are optimal \cite{CohnKumar}. However, in general linear programming bounds are not sharp and it is an open question how big the errors of such bounds can be. It is known \cite{BRV} that the linear programming bounds for the minimal number of points in an equal weight quadrature formula on the sphere $S^d$ are asymptotically optimal up to a constant depending on $d$. Linear programming bounds can also be applied to the sphere packing problem. Kabatiansky and Levenshtein \cite{KabLev} deduced upper bounds for sphere packing from their results on spherical codes.

 In 2003 Cohn and Elkies \cite{ElkiesCohn}  developed  linear programming bounds that apply directly to sphere packings. Using their new method they improved the previously known upper bounds for the sphere packing constant in dimensions from $4$ to $36$. The most striking results obtained by this technique are upper bounds for dimensions $8$ and $24$. For example, their upper bound for $\Delta_8$ was only
$1.000001$ times greater than the lower bound, which is given by the density of the $E_8$ sphere packing. This bound can be improved even further by more extensive computer computations. 

We explain the Cohn--Elkies linear programming bounds in more detail. To this end we recall a few definitions from Fourier analysis. The \emph{Fourier transform} of an $L^1$~function $f:\R^d\to\C$ is defined as
$$\mathcal{F}(f)(y)=\widehat{f}(y):=\int\limits_{\R^d} f(x)\,e^{-2\pi i x\cdot y}\,dx,\quad y\in\R^d $$
where $x\cdot y=\frac12\|x\|^2+\frac12\|y\|^2-\frac12\|x-y\|^2$ is the standard scalar product in $\R^d$.
A $C^\infty$~function $f:\R^d\to\C$ is called a \emph{Schwartz function} if it tends to zero as $\|x\|\to\infty$ faster then any inverse power of $\|x\|$, and the same holds for all partial derivatives of $f$. The set of all Schwartz functions is called the \emph{Schwartz space}. The Fourier transform is an automorphism of this space. We will also need the following wider class of functions. We say that a function $f:\R^d\to\C$ is \emph{admissible} if there is a constant $\delta>0$ such that $|f(x)|$ and $|\widehat{f}(x)|$ are bounded above by a constant times $(1+|x|)^{-d-\delta}$.
The following theorem is the key result of \cite{ElkiesCohn}:
\begin{theorem}\label{thm: Cohn-Elkies} (Cohn, Elkies \cite{ElkiesCohn}) Suppose that  $f:\R^d\to\R$ is an admissible function, is not identically zero, and satisfies:
\begin{equation}\label{eqn: Cohn-Elkies condition 1}f(x)\leq 0\mbox{ for } \|x\|\geq 1\end{equation} and
\begin{equation}\label{eqn: Cohn-Elkies condition 2}\widehat{f}(x)\geq0\mbox{ for all } x\in\R^d.\end{equation}
 Then the  density of  $d$-dimensional
 sphere packings is bounded above by $$\frac{f(0)}{\widehat{f}(0)}\cdot \frac{\pi^{\frac{d}{2}}}{2^d\,\Gamma(\textstyle \frac{d}{2}+1)}=\frac{f(0)}{\widehat{f}(0)}\cdot \mathrm{Vol}\,B_d(0,\frac 12).$$
\end{theorem}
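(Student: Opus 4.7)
The plan is to apply the Poisson summation formula to a suitably chosen double sum of translates of $f$, and then to exploit the sign hypotheses (\ref{eqn: Cohn-Elkies condition 1}) and (\ref{eqn: Cohn-Elkies condition 2}) to extract the density bound. First I would reduce the claim to periodic packings. A standard approximation argument (take the intersection of the packing with a large cube and translate it periodically, letting the cube grow) shows that $\Delta_d$ equals the supremum of densities of periodic packings, so it suffices to establish the bound for such packings. After rescaling so that the minimum distance between centers is $1$ (and balls have radius $\frac12$, whence the factor $\mathrm{Vol}\,B_d(0,\tfrac12)$), a periodic packing is described by a full-rank lattice $\Lambda\subset\R^d$ of period translations together with representatives $x_1,\dots,x_N$ in a fundamental domain, subject to $\|x_i-x_j+v\|\geq 1$ whenever $(i,j,v)\neq(i,i,0)$. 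Its density equals $\frac{N}{|\R^d/\Lambda|}\cdot \mathrm{Vol}\,B_d(0,\tfrac12)$, so the theorem reduces to the inequality $N/|\R^d/\Lambda|\leq f(0)/\widehat f(0)$.

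Next I would consider the double sum
$$S:=\sum_{i,j=1}^N\sum_{v\in\Lambda} f(x_i-x_j+v).$$
Admissibility of both $f$ and $\widehat f$ guarantees absolute convergence and legitimizes Poisson summation applied to each shifted function $f(\,\cdot\,+x_i-x_j)$. The dual-lattice form of Poisson summation gives
$$S=\frac{1}{|\R^d/\Lambda|}\sum_{w\in\Lambda^{*}} \widehat f(w)\,\Bigl|\sum_{i=1}^{N} e^{2\pi i\,w\cdot x_i}\Bigr|^2.$$

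Now I would exploit the two sign conditions. By (\ref{eqn: Cohn-Elkies condition 2}) every term on the spectral side is nonnegative, so keeping only the $w=0$ contribution gives $S\geq N^2\widehat f(0)/|\R^d/\Lambda|$. On the geometric side, split off the diagonal terms with $i=j$ and $v=0$, which contribute exactly $N f(0)$; every other term has $\|x_i-x_j+v\|\geq 1$, so by (\ref{eqn: Cohn-Elkies condition 1}) contributes a nonpositive quantity, yielding $S\leq N f(0)$. Combining,
$$\frac{N\,\widehat f(0)}{|\R^d/\Lambda|}\leq f(0),$$
and multiplying by $\mathrm{Vol}\,B_d(0,\tfrac12)$ gives the required density bound for this packing. (Nondegeneracy: since $f$ is not identically zero, neither is $\widehat f$, and the bound is nonvacuous precisely when $\widehat f(0)>0$; if instead $\widehat f(0)=0$ the stated bound is $+\infty$ and trivially holds.)

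The main obstacle is really bookkeeping rather than depth: one must verify that admissibility makes every manipulation rigorous (absolute convergence of both the geometric and spectral sums, and the validity of Poisson summation for $f$), and one must justify the reduction to periodic packings so that $\Lambda$ and the $x_i$ are available. Once these are in hand, the heart of the proof is the single line of Poisson summation combined with the sign pattern, which is what makes the Cohn--Elkies linear programming bound so elegant.
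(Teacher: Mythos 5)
The paper does not give its own proof of this theorem; it states it as a cited result from Cohn and Elkies \cite{ElkiesCohn} and builds on it. Your argument is nonetheless a correct reproduction of the original Cohn--Elkies proof: the reduction to periodic packings, the double sum $S=\sum_{i,j}\sum_{v\in\Lambda}f(x_i-x_j+v)$, the Poisson-summation identity turning $S$ into $\tfrac{1}{|\R^d/\Lambda|}\sum_{w\in\Lambda^*}\widehat f(w)\bigl|\sum_i e^{2\pi i\,w\cdot x_i}\bigr|^2$, and the two one-sided estimates (keep only $w=0$ on the spectral side by positivity of $\widehat f$; keep only the diagonal on the geometric side by nonpositivity of $f$ outside the unit ball) are exactly the steps in \cite{ElkiesCohn}. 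The admissibility hypothesis is precisely what licenses the absolute convergence and the application of Poisson summation to each translate, and your handling of the degenerate case $\widehat f(0)=0$ is fine. The only thing worth flagging is that the reduction to periodic packings deserves a little more care than the one-line sketch: truncating to a large cube and tiling can create overlaps near the cube's boundary, so one must either leave a small gap or delete the boundary balls and then argue that the lost density vanishes as the cube grows; this is the content of the appendix in \cite{ElkiesCohn}.
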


 Without loss of generality we can assume that a function $f$ in Theorem~\ref{thm: Cohn-Elkies} is radial, i.~e. its value at each point depends only on the distance between the point and the origin \cite[p. 695]{ElkiesCohn}. For a radial function $f_0:\R^d\to\R$ we will denote by $f_0(r)$ the common value of $f_0$ on vectors of length $r$. Henceforth we assume $d=8$. The Poisson summation formula implies
 $$ \sum_{\ell\in\frac{1}{\sqrt{2}}\Lambda_8}f(\ell)= 2^4\,\sum_{\ell\in\sqrt{2}\Lambda_8}\widehat{f}(\ell).$$
 Hence, if a function $f$ satisfies conditions \eqref{eqn: Cohn-Elkies condition 1} and \eqref{eqn: Cohn-Elkies condition 2} then
 $$ \frac{f(0)}{\widehat{f}(0)}\geq 2^4.$$
 We say that an admissible function $f:\R^8\to\R$ is \emph{optimal} if it satisfies \eqref{eqn: Cohn-Elkies condition 1}, \eqref{eqn: Cohn-Elkies condition 2} and $f(0)/\widehat{f}(0)=2^4$.

 The main step in our proof of Theorem \ref{thm: main} is the explicit  construction of an optimal function. It will be convenient for us to scale this function by $\sqrt{2}$.
\begin{theorem}\label{thm: g}
There exists a radial Schwartz function $g:\R^8\to\R$ which satisfies:
\begin{align}
g(x)&\leq 0\mbox{ for } \|x\|\geq \sqrt{2}, \label{eqn: g1}\\
\widehat{g}(x)&\geq0\mbox{ for all } x\in\R^8,\label{eqn: g2}\\
g(0)&=\widehat{g}(0)=1.\label{eqn: g3}
\end{align}
Moreover, the values $g(x)$ and $\widehat{g}(x)$ do not vanish for all vectors $x$ with $\|x\|^2\notin 2\Z_{>0}$.
\end{theorem}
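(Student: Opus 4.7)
The strategy is to build $g$ as a specific real linear combination of the two purely-imaginary radial Fourier eigenfunctions from Section~4. Write $A := -ia$ and $B := -ib$, where $\widehat a = a$ and $\widehat b = -b$; then $A, B \colon \R^8 \to \R$ are radial Schwartz functions with $\widehat A = A$ and $\widehat B = -B$, both vanishing to second order at every sphere $\|x\|^2 = 2n$, $n \in \Z_{\geq 2}$. I take the ansatz
$$g := \alpha\, A + \beta\, B,$$
so that $\widehat g = \alpha A - \beta B$, and the sums/differences $g \pm \widehat g$ cleanly separate the two eigenparts.

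The normalization condition \eqref{eqn: g3} amounts to $\alpha A(0) = 1$ and $\beta B(0) = 0$. The construction of Section~4 arranges that exactly one of $A(0), B(0)$ is zero---say $B(0) = 0$---so that $\alpha$ is determined and $\beta$ remains a free real parameter. Substituting into the inequalities, condition~\eqref{eqn: g2} becomes $\alpha A(x) \geq \beta B(x)$ on all of $\R^8$, while \eqref{eqn: g1} becomes $\alpha A(x) + \beta B(x) \leq 0$ on $\|x\| \geq \sqrt 2$. In Section~4 each of $A$ and $B$ is presented as $\sin^2(\pi \|x\|^2/2)$ times the Laplace transform along the imaginary axis of a (quasi-)modular form of weight $-4$ or $0$ respectively; this product structure explains the double zeros on $\|x\|^2 \in 2\Z_{\geq 2}$ and lets one rewrite both inequalities as pointwise positivity statements for the corresponding modular integrands. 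The still-free parameter $\beta$ is pinned down uniquely by the single condition that $g$ vanish on the minimal sphere $\|x\| = \sqrt 2$, so that \eqref{eqn: g1} holds with equality there.

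The main obstacle is verifying those positivity statements with the uniquely determined $\beta$. One must show that $\widehat g$ stays non-negative at every point of $\R^8$ and that $g$ stays non-positive throughout $\|x\| \geq \sqrt 2$, with no sign changes between consecutive lattice spheres $\|x\|^2 \in 2\Z_{\geq 2}$. Each claim reduces to a pointwise inequality between specific combinations of Eisenstein series (and of the modular discriminant) along the positive imaginary axis of $\h$, which is carried out by explicit $q$-expansions together with uniform estimates on the integrands. Once these sign inequalities are established, the non-vanishing of $g(x)$ and $\widehat g(x)$ for $\|x\|^2 \notin 2\Z_{>0}$ is automatic: all zeros of $A$ and $B$ lie on the lattice spheres by construction, and any interior zero of $g$ or $\widehat g$ would violate the strict versions of the inequalities just verified. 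The Schwartz property of $g$ is inherited immediately from that of $A$ and $B$.
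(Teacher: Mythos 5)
Your overall plan matches the paper's: set $g = \alpha A + \beta B$ with $A=-ia$, $B=-ib$ the $\pm1$ Fourier eigenfunctions built from Laplace transforms of modular forms, then reduce \eqref{eqn: g1} and \eqref{eqn: g2} to sign conditions on modular integrands. However, your mechanism for pinning down $\beta$ does not work, and the correct mechanism is a genuinely necessary ingredient you have not identified.

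You propose to fix $\beta$ by requiring $g(\sqrt 2)=0$. But in the actual construction both $a(\sqrt 2)=0$ and $b(\sqrt 2)=0$ (the $\sin^2(\pi r^2/2)$ prefactor vanishes at $r=\sqrt 2$ too; it only fails to produce a \emph{double} zero there because the accompanying Laplace integral has a simple pole at $r^2=2$), so $g(\sqrt 2)=0$ is automatic for every $\beta$ and imposes no constraint. What actually forces the value of $\beta$ is the requirement that $\widehat g$ have a \emph{double} zero at $r=\sqrt 2$, i.e.\ $\widehat g{}'(\sqrt 2)=0$. This is not cosmetic: if $\widehat g$ had only a simple zero at $r=\sqrt 2$ it would change sign there, contradicting \eqref{eqn: g2}. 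Equivalently, on the integrand side one must choose $\beta$ so that the exponentially growing terms of $-t^2\phi_0(i/t)\sim \frac{36}{\pi^2}e^{2\pi t}$ and of $\psi_I(it)\sim e^{2\pi t}$ cancel in the combination entering $\widehat g$; only then does $\widehat g(r)=\mathrm{const}\cdot\sin^2(\pi r^2/2)\int_0^\infty B(t)e^{-\pi r^2 t}\,dt$ have a convergent integral for \emph{all} $r>0$ (with $B(t)>0$), rather than only for $r>\sqrt2$. In the paper this cancellation shows up concretely as $\frac{\pi i}{8640}a'(\sqrt 2)-\frac{i}{240\pi}b'(\sqrt 2)=0$, using $a'(\sqrt2)=\frac{72\sqrt2\,i}{\pi}$ and $b'(\sqrt2)=2\sqrt2\,\pi i$. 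Without recognizing this constraint you have a one-parameter family $g_\beta$ almost all of whose members violate \eqref{eqn: g2} near $r=\sqrt2$, so the positivity program in your last paragraph would fail for the generic $\beta$ your normalization permits.

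A secondary point: the final non-vanishing claim is not "automatic" from the weak inequalities $g\le 0$ and $\widehat g\ge 0$; one needs the \emph{strict} pointwise inequalities $A(t)<0$ and $B(t)>0$ on $(0,\infty)$ for the integrands, so that the Laplace transforms are strictly signed and zeros occur only at the zeros of the $\sin^2$ factor, i.e.\ exactly on $\|x\|^2\in 2\Z_{>0}$ (together with a separate check of the values at the origin, where the $\sin^2$ factor also vanishes). This is what the paper's interval-arithmetic verification of $A<0$, $B>0$ actually delivers; your write-up gestures at "strict versions" but should make explicit that the strictness of the integrand inequalities is what is being used.
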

Theorem \ref{thm: Cohn-Elkies} applied to the optimal function $f(x)=g(\sqrt{2}\,x)$ immediately implies Theorem \ref{thm: main}. Additionally, the function $g$ satisfies the conclusions of \cite[Conjecture~8.1]{ElkiesCohn}. This implies the uniqueness of the densest periodic sphere packing in $\R^8$.

Let us briefly explain our strategy for the proof of Theorem~\ref{thm: g}. First, we observe that conditions \eqref{eqn: g1}--\eqref{eqn: g3} imply additional properties of the function $g$. Suppose that there exists a Schwartz function $g$ such that the conditions \eqref{eqn: g1}--\eqref{eqn: g3} hold. The Poisson summation formula states
\begin{equation}
 \sum_{\ell\in\Lambda_8}g(\ell)= \sum_{\ell\in\Lambda_8}\widehat{g}(\ell).
\end{equation}
Since $\|\ell\|\geq\sqrt{2}$ for all $\ell\in\Lambda_8\backslash\{0\}$, conditions \eqref{eqn: g1} and \eqref{eqn: g3} imply
\begin{equation}
  \sum_{\ell\in\Lambda_8}g(\ell)\leq g(0)=1.
\end{equation}
On the other hand, conditions \eqref{eqn: g2} and \eqref{eqn: g3} imply
\begin{equation}
  \sum_{\ell\in\Lambda_8}\widehat{g}(\ell)\geq \widehat{g}(0)=1.
\end{equation}
Therefore, we deduce that $g(\ell)=\widehat{g}(\ell)=0$ for all $\ell\in\Lambda_8\backslash\{0\}$. Moreover, the first derivatives $\frac{d}{dr}g(r)$ and $\frac{d}{dr}\widehat{g}(r)$ also vanish at all $\Lambda_8$-lattice points of length bigger than $\sqrt{2}$. We will say that $g$ and $\widehat{g}$ have double zeroes at these points. This property gives us a hint on constructing the function $g$ explicitly.

In Section \ref{sec: g} a function $g$ satisfying \eqref{eqn: g1}--\eqref{eqn: g3} is given in a closed form. Namely, it is defined as an integral transform (Laplace transform) of a \emph{modular form} of a certain kind. The next section is a brief introduction to the theory of modular forms.

\section{Modular forms}
Let $\h$ be the upper half-plane $\{z\in\C\mid\Im(z)>0\}$. The modular group $\Gamma(1):=\mathrm{PSL}_2(\Z)$ acts on $\h$ by linear fractional transformations
$$\left(\begin{smallmatrix}a&b\\c&d\end{smallmatrix}\right)z:=\frac{az+b}{cz+d}.$$

Let $N$ be a positive integer. The \emph{level $N$ principal congruence subgroup} of $\Gamma(1)$ is
$$\Gamma(N):=\left\{\left.\left(\begin{smallmatrix}a&b\\c&d\end{smallmatrix}\right)\in\Gamma(1)\right|\left(\begin{smallmatrix}a&b\\c&d\end{smallmatrix}\right)\equiv\left(\begin{smallmatrix}1&0\\0&1\end{smallmatrix}\right)\;\mathrm{mod}\;N\right\}.$$
A subgroup $\Gamma\subset\Gamma(1)$ is called a \emph{congruence subgroup} if $\Gamma(N)\subset\Gamma$ for some $N\in\N$. An important example of a congruence subgroup is
$$\Gamma_0(N):=\left\{\left.\left(\begin{smallmatrix}a&b\\c&d\end{smallmatrix}\right)\in\Gamma(1)\right|\;c\equiv0\;\mathrm{mod}\;N\right\}.$$

Let $z\in\h$, $k\in\Z$, and $\left(\begin{smallmatrix}a&b\\c&d\end{smallmatrix}\right)\in\mathrm{SL}_2(\Z)$. The \emph{automorphy factor} of weight $k$ is defined as
$$j_k(z,\left(\begin{smallmatrix}a&b\\c&d\end{smallmatrix}\right)):=(cz+d)^{-k}.$$
The automorphy factor satisfies the \emph{chain rule}
$$j_k(z,\gamma_1\gamma_2)=j_k(z,\gamma_2)\,j_k(\gamma_2z,\gamma_1). $$
Let $F$ be a  function on $\h$ and $\gamma\in\mathrm{PSL}_2(\Z)$. Then the \emph{slash operator} acts on $F$ by
$$(F|_k\gamma)(z):=j_k(z,\gamma)\,F(\gamma z). $$
The chain rule implies
$$F|_k\gamma_1\gamma_2=(F|_k\gamma_1)|_k\gamma_2.$$

A \emph{(holomorphic) modular form} of integer weight $k$ and congruence subgroup $\Gamma$ is a holomorphic function $f:\h\to\C$ such that:
\begin{enumerate}
 \item $f|_k\gamma=f$ for all $\gamma\in\Gamma$ and
 \item for each $\alpha\in\Gamma(1)$ the function $f|_k\alpha$ has Fourier expansion $$f|_k\alpha (z)=\sum_{n=0}^\infty c_f(\alpha,\frac{n}{n_\alpha})\,e^{2\pi i \frac{n}{n_\alpha}z}$$ for some $n_\alpha\in\N$ and Fourier coefficients $c_f(\alpha,m)\in\C$.
\end{enumerate}

Let $M_k(\Gamma)$ be the space of modular forms of weight $k$ for the congruence subgroup $\Gamma$. A key fact in the theory of modular forms is that the spaces $M_k(\Gamma)$ are finite dimensional.

We consider several examples of modular forms. For an even integer $k\geq 4$ we define the \emph{weight $k$ Eisenstein series} as
\begin{equation}\label{eqn: Ek definition}E_k(z):=\frac{1}{2\zeta(k)}\sum_{(c,d)\in\Z^2\backslash(0,0)}(cz+d)^{-k}.\end{equation}
Since the sum converges absolutely, it is easy to see that $E_k\in M_k(\Gamma(1))$. The
Eisenstein series possesses the Fourier expansion
\begin{equation}\label{eqn: Ek Fourier}E_k(z)=1+\frac{2}{\zeta(1-k)}\sum_{n=1}^\infty \sigma_{k-1}(n)\,e^{2\pi i n z}, \end{equation}
where $\sigma_{k-1}(n)\,=\,\sum_{d|n} d^{k-1}$. In particular, we have
\begin{align*}
  E_4(z)\,=\,& 1+240\sum_{n=1}^\infty \sigma_3(n)\,e^{2\pi i n z} ,\\
  E_6(z)\,=\,& 1-504\sum_{n=1}^\infty \sigma_5(n)\,e^{2\pi i n z}.
\end{align*}
The infinite sum \eqref{eqn: Ek definition} does not converge absolutely for $k=2$. On the other hand, the expression \eqref{eqn: Ek Fourier} converges to a holomorphic function on the upper half-plane and therefore we set
  \begin{equation}\label{eqn: E2 def}E_2(z):= 1-24\sum_{n=1}^\infty \sigma_1(n)\,e^{2\pi i n z}.\end{equation}
This function is not modular, but it satisfies
\begin{equation}\label{eqn: E2 transform}z^{-2}\,E_2\Big(\frac{-1}{z}\Big)=E_2(z) -\frac{6i}{\pi}\, \frac{1}{z}.\end{equation}
The proof of this identity can be found in \cite[Section~2.3]{1-2-3}.
The weight two Eisenstein series $E_2$ is an example of a \emph{quasimodular form} \cite[Section~5.1]{1-2-3}.

Another example of modular forms we consider are \emph{theta functions} \cite[Section~3.1]{1-2-3}.
We define three theta functions (so-called ``Thetanullwerte'') as
\begin{align*}
  \theta_{00}(z)\,=\, & \sum_{n\in\Z}e^{\pi i n^2 z}, \\
  \theta_{01}(z)\,=\, & \sum_{n\in\Z}(-1)^n\,e^{\pi i n^2 z}, \\
  \theta_{10}(z)\,=\, & \sum_{n\in\Z}e^{\pi i (n+\frac12)^2 z}.
\end{align*}
The group $\Gamma(1)$ is generated by the elements $T=\left(\begin{smallmatrix}1&1\\0&1\end{smallmatrix}\right)$ and $S=\left(\begin{smallmatrix}0&1\\-1&0\end{smallmatrix}\right)$. These elements  act on the fourth powers of the theta functions in the following way
\begin{align}
z^{-2}\,\theta^4_{00}\Big(\frac{-1}{z}\Big)\,=\,&-\theta_{00}^4(z), \label{eqn: theta transform S}\\
z^{-2}\,\theta^4_{01}\Big(\frac{-1}{z}\Big)\,=\,&-\theta_{10}^4(z),\\
z^{-2}\,\theta^4_{10}\Big(\frac{-1}{z}\Big)\,=\,&-\theta_{01}^4(z),
\end{align}
and
\begin{align}
\theta^4_{00}(z+1)\,=\,&\theta_{01}^4(z),\\
\theta^4_{01}(z+1)\,=\,&\theta_{00}^4(z),\\
\theta^4_{10}(z+1)\,=\,&-\theta_{10}^4(z). \label{eqn: theta transform T}
\end{align}
Moreover, these three theta functions satisfy the \emph{Jacobi identity}
\begin{equation}
\theta_{01}^4+\theta_{10}^4=\theta_{00}^4.
\end{equation}
The theta functions $\theta^4_{00},\theta^4_{01}$, and $\theta^4_{10}$ belong to $M_2(\Gamma(2))$.

A \emph{weakly-holomorphic modular form} of integer weight $k$ and congruence subgroup $\Gamma$ is a holomorphic function $f:\h\to\C$ such that:
\begin{enumerate}
 \item $f|_k\gamma=f$ for all $\gamma\in\Gamma$,
 \item for each $\alpha\in\Gamma(1)$ the function $f|_k\alpha$ has  Fourier expansion $$f|_k\alpha (z)=\sum_{n=n_0}^\infty c_f(\alpha,\frac{n}{n_\alpha})\,e^{2\pi i \frac{n}{n_\alpha}z}$$ for some $n_0\in\Z$ and $n_\alpha\in\N$.
\end{enumerate}
For an $m$-periodic holomorphic function $f$ and $n\in\frac 1m \Z$ we will denote the $n$-th Fourier coefficient of $f$ by $c_f(n)$ so that
$$f(z)=\sum_{n\in\frac 1m \Z} c_f(n)\,e^{2\pi i n z}.$$
We denote the space of weakly-holomorphic modular forms of weight $k$ and group $\Gamma$ by $M_k^!(\Gamma)$. The spaces $M_k^!(\Gamma)$ are infinite dimensional.
Probably the most famous weakly-holomorphic modular form is the \emph{elliptic j-invariant}
$$j\,:=\,\frac{1728\, E_4^3}{E_4^3-E_6^2}. $$
This function belongs to $M_0^!(\Gamma(1))$ and has the Fourier expansion
$$j(z)\,=\,q^{-1} + 744 + 196884\, q + 21493760\, q^2 + 864299970\, q^3 +
 20245856256\, q^4 + O(q^5) $$
 where $q=e^{2\pi i z}$. Using a simple computer algebra system such as PARI GP or Mathematica one can compute the first hundred terms of this Fourier expansion within a few seconds. An important question is to find an  asymptotic formula for $c_j(n)$, the $n$-th Fourier coefficient  of $j$. Using the Hardy-Ramanujan circle method \cite[p. 460 -- 461]{Rademacher38} or the non-holomorphic Poincar{\'e} series \cite{Petersson32} one can show that \begin{equation} \label{eqn: j Fourier asymptotic}
 c_j(n)=\frac{2\pi}{\sqrt{n}}\sum_{k=1}^\infty \frac{A_k(n)}{k}\,I_1\left(\frac{4\pi \sqrt{n}}{k}\right)\qquad n\in\Z_{>0}
\end{equation}
 where
 $$A_k(n)= \sum_{\substack{h\;\mathrm{mod}\;k\\(h,k)=1}} e^{\frac{-2\pi i}{k}(nh+h')},\quad hh'\equiv -1(\mbox{mod}\;k),$$
 and $I_\alpha(x)$ denotes the modified Bessel function of the first kind defined as in \cite[Section~9.6]{Abramowitz}. A similar convergent asymptotic expansion holds for the Fourier coefficients of any weakly holomorphic modular form \cite[p.660 -- 662]{Hejhal}, \cite[Propositions~1.10 and~1.12]{Bruinier}. Such a convergent expansion implies effective estimates for the Fourier coefficients.

 For a  comprehensive introduction to the theory of modular forms we refer the  reader to \cite{1-2-3} and \cite{first course}.

\section{Fourier eigenfunctions with double zeroes at lattice points}\label{sec: fourier double zeroes}
In this section we construct two radial Schwartz functions $a,b:\R^8\to i\R$ such that
\begin{align}\mathcal{F}(a)&=a\label{eqn: a fourier}\\
 \mathcal{F}(b)&=-b\label{eqn: b fourier}
\end{align}
which double zeroes at all $\Lambda_8$-vectors of length greater than $\sqrt{2}$. Recall that each vector of $\Lambda_8$ has length $\sqrt{2n}$ for some $n\in\N_{\geq 0}$. We define $a$ and $b$ so that their values are purely imaginary because this simplifies some of our computations. We will show in Section \ref{sec: g} that an appropriate linear combination of functions $a$ and $b$ satisfies conditions \eqref{eqn: g1}--\eqref{eqn: g3}.

First, we will define the function $a$. To this end we consider the following weakly holomorphic modular forms:
\begin{align} \varphi_{-2}\,:=\,&\frac{-1728 \,E_4\,E_6}{E_4^3-E_6^2},\\
 \varphi_{-4}\,:=\,&\frac{1728 \,E_4^2}{E_4^3-E_6^2}.
 \end{align}
  The modular form $E_4^3-E_6^2$ does not vanish in the upper half-plain, hence $\varphi_{-2}$ and $\varphi_{-4}$ have no poles in $\h$. Analogously to \eqref{eqn: j Fourier asymptotic}, the Fourier coefficients of $\varphi_{-2}$ and $\varphi_{-4}$ satisfy
  \begin{equation}\label{eqn: varphi Fourier asymptotic}
  c_{\varphi_\kappa}(n)=2\pi\,n^{\frac{\kappa-1}{2}}\sum_{k=1}^\infty \frac{A_k(n)}{k}\,I_{1-\kappa}\left(\frac{4\pi \sqrt{n}}{k}\right)\qquad n\in\Z_{>0},\;\kappa=-2,-4.
  \end{equation}
  We define
\begin{align}
  \phi_{-4}\,:= \,&\varphi_{-4},\label{eqn: def phi4}\\
  \phi_{-2}\,:= \,&\varphi_{-4}\,E_2+\varphi_{-2},\label{eqn: def phi2}\\
  \phi_{0}\,:= \,&\varphi_{-4}\,E_2^2+2\varphi_{-2}\,E_2+j-1728.\label{eqn: def phi0}
\end{align}
 The function $\phi_0(z)$ is not modular; however the identity \eqref{eqn: E2 transform} implies the following transformation rule:
\begin{equation}\label{eqn: phi0 transform}
\phi_0\Big(\frac{-1}{z}\Big)=\phi_0(z)-\frac{12i}{\pi}\,\frac{1}{z}\,\phi_{-2}(z)-\frac{36}{\pi^2}\,\frac{1}{z^2}\,\phi_{-4}(z).
\end{equation}
Moreover, we have
\begin{align}
\phi_{-2}\,=\,&-3\,D(\varphi_{-4})+3\varphi_{-2},\\
\phi_{0}\,=\,&12\,D^2(\varphi_{-4})-36\,D(\varphi_{-2})+24j-17856,\label{eqn: D varphi}
\end{align}
where $Df(z)=\frac{1}{2\pi i} \frac{d}{dz} f(z)$. These identities combined with \eqref{eqn: j Fourier asymptotic} and \eqref{eqn: varphi Fourier asymptotic} give the asymptotic formula for the Fourier coefficients $c_{\phi_{-4}}(n)$, $c_{\phi_{-2}}(n)$, and $c_{\phi_{0}}(n)$.
The  first several terms of the corresponding Fourier expansions are
\begin{align}
  \phi_{-4}(z)\,=\,&q^{-1} + 504 + 73764\, q + 2695040\, q^2 + 54755730\, q^3 +O(q^4),\label{eqn: phi fourier4}\\
  \phi_{-2}(z)\,=\,&720 + 203040\, q + 9417600\, q^2 + 223473600\, q^3 + 3566782080\, q^4+O(q^5),\label{eqn: phi fourier2}\\
  \phi_{0}(z)\,=\,&518400\, q + 31104000\, q^2 + 870912000\, q^3 + 15697152000\, q^4+O(q^5),\label{eqn: phi fourier0}
\end{align}
where $q=e^{2\pi i z}$.
For $x\in\R^8$ we define
\begin{align}\label{eqn: a(r) definition}
  a(x):=&\int\limits_{-1}^i\phi_0\Big(\frac{-1}{z+1}\Big)\,(z+1)^2\,e^{\pi i \|x\|^2 z}\,dz
  +\int\limits_{1}^i\phi_0\Big(\frac{-1}{z-1}\Big)\,(z-1)^2\,e^{\pi i \|x\|^2 z}\,dz\\
  -&2\int\limits_{0}^i\phi_0\Big(\frac{-1}{z}\Big)\,z^2\,e^{\pi i \|x\|^2 z}\,dz
  +2\int\limits_{i}^{i\infty}\phi_0(z)\,e^{\pi i \|x\|^2 z}\,dz.\nonumber
\end{align}
We observe that the contour integrals in \eqref{eqn: a(r) definition} converge absolutely and uniformly for  $x\in\R^8$. Indeed,
$\phi_0(z)=O(e^{-2\pi i z})$ as $\Im(z)\to \infty$. Therefore, $a(x)$ is well defined. Now we prove that $a$ satisfies condition \eqref{eqn: a fourier}.
\begin{proposition}\label{prop: a(r) Fourier}
The function $a$ defined by \eqref{eqn: a(r) definition} belongs to the Schwartz space and satisfies $$\widehat{a}(x)=a(x). $$
\end{proposition}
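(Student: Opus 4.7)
My plan is to prove $\widehat{a}=a$ by substituting the definition of $a$ into the Fourier integral, exchanging the order of integration, and then applying the change of variables $w=-1/z$ to each of the four contour integrals in \eqref{eqn: a(r) definition}. The substitution $w=-1/z$ permutes the four contours among themselves (swapping $[0,i]\leftrightarrow[i,i\infty]$ and $[-1,i]\leftrightarrow[1,i]$), and a short manipulation using the $1$-periodicity of $\phi_0$ will show that the transformed kernels coincide with those of the partner pieces, giving $\widehat{a}=a$ at one stroke.

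Denote the four summands in \eqref{eqn: a(r) definition} by $a_1,\ldots,a_4$ and their kernels by $\psi_1,\ldots,\psi_4$. For $z\in\h$ the Gaussian $g_z(x):=e^{\pi i\|x\|^2 z}$ is Schwartz on $\R^8$, and completing the square gives $\widehat{g_z}(y)=z^{-4}\,e^{\pi i\|y\|^2(-1/z)}$. After justifying Fubini, each Fourier-transformed piece becomes $\int\psi_k(z)\,z^{-4}\,e^{\pi i\|y\|^2(-1/z)}\,dz$, and the substitution $w=-1/z$ (with $dz=dw/w^2$) turns this into $\int\psi_k(-1/w)\,w^2\,e^{\pi i\|y\|^2 w}\,dw$ on the image contour. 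For $k=3$ one computes $\psi_3(-1/w)\,w^2=-2\phi_0(w)$ on $[i\infty,i]$, which yields $\widehat{a_3}=a_4$, and symmetrically $\widehat{a_4}=a_3$. For $k=1$ the transformed kernel on $[1,i]$ is $(w-1)^2\,\phi_0(-w/(w-1))$; the elementary identity $-w/(w-1)=-1/(w-1)-1$ together with the $1$-periodicity of $\phi_0$ converts this into $(w-1)^2\,\phi_0(-1/(w-1))$, which is exactly $\psi_2(w)$. Hence $\widehat{a_1}=a_2$, and by the symmetric computation $\widehat{a_2}=a_1$. Summing gives $\widehat{a}=a$.

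To show $a$ is Schwartz and to legitimate the Fubini exchange, I would use the bound $|\phi_0(z)|=O(e^{-2\pi\Im z})$ as $\Im z\to\infty$, visible from \eqref{eqn: phi fourier0}, together with $|e^{\pi i\|x\|^2 z}|=e^{-\pi\|x\|^2\Im z}$. Parameterizing the first contour as $z=-1+t(1+i)$, $t\in(0,1]$, one computes $\Im(-1/(z+1))=1/(2t)$ and hence $|\phi_0(-1/(z+1))|\le Ce^{-\pi/t}$, so the integrand is at most $C't^2\,e^{-\pi/t-\pi\|x\|^2 t}$. This is integrable in $t$, and the elementary inequality $\pi/t+\pi\|x\|^2 t\ge 2\pi\|x\|$ shows that $a_1(x)$ decays like $e^{-2\pi\|x\|}$. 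The other three contours admit identical estimates, and differentiation under the integral sign only brings down polynomial factors in $\|x\|$ and $z$ which are absorbed into the same exponential bound, so $a$ is Schwartz. The same pointwise estimates yield $\int|\psi_k(z)|(\Im z)^{-4}|dz|<\infty$, which is the absolute-convergence condition required for the Fubini swap.

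The main obstacle is the careful bookkeeping of the four substitutions. A pleasant feature of the construction is that, thanks to $1$-periodicity, the deeper modular identity \eqref{eqn: phi0 transform} is \emph{not} needed for the self-duality $\widehat{a}=a$ itself: the four pieces of $a$ have been designed precisely so that $z\mapsto -1/z$ permutes them. Identity \eqref{eqn: phi0 transform} will presumably enter later, when producing the double zeros of $a$ at the nonzero $\Lambda_8$-vectors.
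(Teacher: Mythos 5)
Your proof is correct and follows essentially the same route as the paper: apply $\mathcal{F}$ to the Gaussian kernel, justify Fubini by the same $|\phi_0(z)|=O(e^{-2\pi\Im z})$ bound, substitute $w=-1/z$, and close using the $1$-periodicity of $\phi_0$. The only cosmetic differences are your cleaner bookkeeping of the four contour pieces as a permutation under $z\mapsto -1/z$, and your use of the AM-GM inequality $\pi/t+\pi\|x\|^2 t\ge 2\pi\|x\|$ in place of the paper's $K_1$-Bessel bound; both establish the needed rapid decay.
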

\begin{proof}
First, we prove that $a$ is a Schwartz function. From \eqref{eqn: j Fourier asymptotic}, \eqref{eqn: varphi Fourier asymptotic}, and \eqref{eqn: D varphi} we deduce that the Fourier coefficients of $\phi_0$ satisfy
$$|c_{\phi_0}(n)|\leq2\,e^{4\pi\sqrt{n}}\quad n\in\Z_{>0}.$$ Thus, there exists a positive constant $C$ such that
$$|\phi_0(z)|\leq C\,e^{-2\pi \Im{z}}\qquad \mbox{for } \; \Im{z}>\frac 12.$$ We estimate the first summand in the right-hand side of \eqref{eqn: a(r) definition}.  For $r\in\R_{\geq 0}$ we have
\begin{align*}&\Bigg|\int\limits_{-1}^{i}\phi_0\Big(\frac{-1}{z+1}\Big)\,(z+1)^2\,e^{\pi i r^2 z}\,dz\Bigg|=\Bigg|\int\limits_{i\infty}^{-1/(i+1)}\phi_0(z)\,z^{-4}\,e^{\pi i r^2 (-1/z-1)}\,dz\Bigg|\leq \\
 &C_1\int\limits_{1/2}^{\infty}e^{-2\pi t}\,e^{-\pi  r^2/t}\,dt\leq C_1\int\limits_{0}^{\infty}e^{-2\pi t}\,e^{-\pi  r^2/t}\,dt=C_2\,r\,K_1(2\sqrt{2}\,\pi\,r)
\end{align*}
where $C_1$ and $C_2$ are some positive constants and $K_\alpha(x)$ is the modified Bessel function of the second kind defined as in \cite[Section~9.6]{Abramowitz}. This estimate also holds for the second and third summand in \eqref{eqn: a(r) definition}.
For the last summand we have
$$ \Bigg|\int\limits_{i}^{i\infty}\phi_0(z)\,e^{\pi i r^2 z}\,dz\Bigg|\leq C\,\int\limits_{1}^{\infty} e^{-2\pi t}\,e^{-\pi r^2 t}\,dt=C_3\frac{e^{\pi(r^2+2)}}{r^2+2}.$$
Therefore, we arrive at
$$|a(r)|\leq 4C_2\,r\,K_1(2\sqrt{2}\pi r)+2C_3\frac{e^{-\pi(r^2+2)}}{r^2+2}.$$
It is easy to see that the left hand side of this inequality decays faster then any inverse power of $r$. Analogous estimates can be obtained for all derivatives $\frac{d^k}{dr^k}a(r)$.

Now we show that $a$ is an eigenfunction of the Fourier transform. We recall that the Fourier transform of a Gaussian function is
\begin{equation}\label{eqn: Gaussian Fourier}\mathcal{F}(e^{\pi i  \|x\|^2 z})(y)=z^{-4}\,e^{\pi i \|y\|^2 \,(\frac{-1}{z}) }.\end{equation}
Next, we exchange the contour integration with respect to $z$ variable and Fourier transform  with respect to $x$ variable in \eqref{eqn: a(r) definition}. This can be done, since the corresponding double integral converges absolutely. In this way we obtain
\begin{align*}
  \widehat{a}(y)=&\int\limits_{-1}^i\phi_0\Big(\frac{-1}{z+1}\Big)\,(z+1)^2\,z^{-4}\,e^{\pi i \|y\|^2 \,(\frac{-1}{z})}\,dz
  +\int\limits_{1}^i\phi_0\Big(\frac{-1}{z-1}\Big)\,(z-1)^2\,z^{-4}\,e^{\pi i \|y\|^2 \,(\frac{-1}{z})}\,dz\\
  -&2\int\limits_{0}^i\phi_0\Big(\frac{-1}{z}\Big)\,z^2\,z^{-4}\,e^{\pi i \|y\|^2 \,(\frac{-1}{z})}\,dz +2\int\limits_{i}^{i\infty}\phi_0(z)\,z^{-4}\,e^{\pi i \|y\|^2 \,(\frac{-1}{z})}\,dz.
\end{align*}
Now we make a change of variables $w=\frac{-1}{z}$. We obtain
\begin{align*}
  \widehat{a}(y)=&\int\limits_{1}^i\phi_0\Big(1-\frac{1}{w-1}\Big)\,(\frac{-1}{w}+1)^2\,w^{2}\,e^{\pi i \|y\|^2 \,w}\,dw\\
  +&\int\limits_{-1}^i\phi_0\Big(1-\frac{1}{w+1}\Big)\,(\frac{-1}{w}-1)^2\,w^2\,e^{\pi i \|y\|^2 \,w}\,dw\\
  -&2\int\limits_{i \infty}^i\phi_0(w)\,e^{\pi i \|y\|^2 \,w}\,dw +2\int\limits_{i}^{0}\phi_0\Big(\frac{-1}{w}\Big)\,w^{2}\,e^{\pi i \|y\|^2 \,w}\,dw.
\end{align*}
Since $\phi_0$ is $1$-periodic we have
\begin{align*}
  \widehat{a}(y)\,=\,&\int\limits_{1}^i\phi_0\Big(\frac{-1}{z-1}\Big)\,(z-1)^2\,e^{\pi i \|y\|^2 \,z}\,dz
  +\int\limits_{-1}^i\phi_0\Big(\frac{-1}{z+1}\Big)\,(z+1)^2\,e^{\pi i \|y\|^2 \,z}\,dz\\
  +&2\int\limits_{i}^{i\infty}\phi_0(z)\,e^{\pi i \|y\|^2 \,z}\,dz
  -2\int\limits_{0}^{i}\phi_0\Big(\frac{-1}{z}\Big)\,z^{2}\,e^{\pi i \|y\|^2 \,z}\,dz\\
  \,=\,&a(y).
\end{align*}
This finishes the proof of the proposition.
\end{proof}

Next, we check that $a$ has double zeroes at all $\Lambda_8$-lattice points of length greater then $\sqrt{2}$.

\begin{proposition}\label{prop: a(r) double zeroes} For $r>\sqrt{2}$ we can express $a(r)$ in the following form
\begin{equation}\label{eqn: a double zeroes}
  a(r)=-4\sin(\pi r^2/2)^2\,\int\limits_{0}^{i\infty}\phi_0\Big(\frac{-1}{z}\Big)\,z^2\,e^{\pi i r^2 \,z}\,dz.
\end{equation}
\end{proposition}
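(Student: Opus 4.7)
The plan is to reduce the four contour integrals defining $a(r)$ to the single target integral along the imaginary axis. The tools required are Cauchy's theorem, the transformation law \eqref{eqn: phi0 transform}, the $1$-periodicity of $\phi_0,\phi_{-2},\phi_{-4}$, and the elementary identity $e^{\pi i r^2}+e^{-\pi i r^2}-2=-4\sin^2(\pi r^2/2)$. The hypothesis $r>\sqrt{2}$ enters exactly once, at the contour-shift step.

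First I align the lower endpoints of the first three integrals: substituting $u=z+1$ in the first integral of \eqref{eqn: a(r) definition} and $u=z-1$ in the second rewrites $I_1+I_2+I_3$ as $e^{-\pi i r^2}J_1+e^{\pi i r^2}J_{-1}-2J_0$, where $J_a:=\int_0^{a+i}\phi_0(-1/u)\,u^2\,e^{\pi i r^2 u}\,du$ for $a\in\{-1,0,1\}$. The integrand is holomorphic on $\h$ and decays super-exponentially as $u\to 0$ from within $\h$ (because $-1/u$ has large imaginary part and $\phi_0$ starts at $q$), so Cauchy's theorem lets me deform the straight path into the L-shape through $i$ and obtain $J_{\pm 1}=J_0+K_{\pm 1}$, where $K_a:=\int_i^{a+i}\phi_0(-1/u)\,u^2\,e^{\pi i r^2 u}\,du$. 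This yields
$$I_1+I_2+I_3=-4\sin^2(\pi r^2/2)\,J_0+e^{-\pi i r^2}K_1+e^{\pi i r^2}K_{-1}.$$
Writing $L:=\int_i^{i\infty}\phi_0(-1/z)\,z^2\,e^{\pi i r^2 z}\,dz$, so that $J_0+L$ is the target integral, everything reduces to verifying
$$e^{-\pi i r^2}K_1+e^{\pi i r^2}K_{-1}+I_4=-4\sin^2(\pi r^2/2)\,L. \qquad(\star)$$

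To establish $(\star)$ I deform each horizontal contour for $K_{\pm 1}$ upward through $iT$ and $\pm 1+iT$ and let $T\to\infty$. The top horizontal segment vanishes in the limit precisely because $r^2>2$: by \eqref{eqn: phi0 transform} the integrand equals $[\phi_0(z)z^2-\tfrac{12iz}{\pi}\phi_{-2}(z)-\tfrac{36}{\pi^2}\phi_{-4}(z)]\,e^{\pi i r^2 z}$, the only growing factor is $\phi_{-4}(z)\sim e^{-2\pi i z}$, and the Gaussian damping $e^{\pi i r^2 z}$ beats it exactly when $r>\sqrt{2}$. The deformation produces $K_{\pm 1}=L-\int_{\pm 1+i}^{\pm 1+i\infty}\phi_0(-1/z)\,z^2\,e^{\pi i r^2 z}\,dz$. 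Translating by $w=z\mp 1$, picking up a factor $e^{\pm\pi i r^2}$, and using the $1$-periodicity of $\phi_0,\phi_{-2},\phi_{-4}$, the residual integrands become $\phi_0(w)(w\pm 1)^2-\tfrac{12i(w\pm 1)}{\pi}\phi_{-2}(w)-\tfrac{36}{\pi^2}\phi_{-4}(w)$ against $e^{\pi i r^2 w}$.

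Finally I add $e^{-\pi i r^2}K_1+e^{\pi i r^2}K_{-1}$. The outer exponential factors cancel the translation factors, and the identities $(w+1)^2+(w-1)^2=2(w^2+1)$ and $(w+1)+(w-1)=2w$ repackage the integrand, once more via \eqref{eqn: phi0 transform}, as $2\bigl[\phi_0(-1/w)\,w^2+\phi_0(w)\bigr]$. Integrating from $i$ to $i\infty$ produces $2L+I_4$, and the whole expression collapses to $2\cos(\pi r^2)\,L-2L-I_4=-4\sin^2(\pi r^2/2)\,L-I_4$, which is $(\star)$. The main obstacle is not a deep analytic estimate — it is the careful bookkeeping that ensures the $\phi_{-4}$ and $\phi_{-2}$ contributions generated by the transformation law recombine into $\phi_0(-1/w)\,w^2+\phi_0(w)$. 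The one substantive analytic input is the condition $r>\sqrt{2}$, which alone legitimizes pushing the contours to $i\infty$ against the $q^{-1}$-pole of $\phi_{-4}$.
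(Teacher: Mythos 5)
Your argument is correct and is essentially the paper's proof run in the opposite direction: the paper starts from the single integral $d(r)$, expands $-4\sin^2(\pi r^2/2)=e^{\pi i r^2}+e^{-\pi i r^2}-2$, shifts to paths ending at $i\infty\mp 1$, deforms to L-shapes through $i$ (using $r>\sqrt{2}$), and applies the second-difference identity $\phi_0(-1/(z+1))(z+1)^2-2\phi_0(-1/z)z^2+\phi_0(-1/(z-1))(z-1)^2=2\phi_0(z)$; you start from $a(r)$, align lower endpoints, introduce $J_0,K_{\pm 1},L$, deform with the same $r>\sqrt{2}$ input, and carry out the same telescoping via $(w+1)^2+(w-1)^2=2(w^2+1)$ and $(w+1)+(w-1)=2w$. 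Same contour deformations, same use of \eqref{eqn: phi0 transform} and $1$-periodicity, same algebraic cancellation — only the direction of the derivation and the bookkeeping labels differ.
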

\begin{proof}
We denote the right hand side of \eqref{eqn: a double zeroes} by $d(r)$.  It is easy to see that $d(r)$ is well-defined. Indeed, from the transformation formula \eqref{eqn: phi0 transform} and the expansions \eqref{eqn: phi fourier0}--\eqref{eqn: phi fourier4} we obtain
\begin{align*}
\phi_0\Big(\frac{-1}{it}\Big)=&O(e^{-2\pi/t})\quad\mbox{as}\;t\to 0\\
\phi_0\Big(\frac{-1}{it}\Big)=&O(t^{-2}\,e^{2\pi t})\quad\mbox{as}\;t\to \infty
\end{align*}
Hence, the integral \eqref{eqn: a double zeroes} converges absolutely for $r>\sqrt{2}$.
 We can write 
\begin{align*}
 d(r)=&\int\limits_{-1}^{i\infty-1}\phi_0\Big(\frac{-1}{z+1}\Big)\,(z+1)^2\,e^{\pi i r^2 \,z}\,dz-
 2\int\limits_{0}^{i\infty}\phi_0\Big(\frac{-1}{z}\Big)\,z^2\,e^{\pi i r^2 \,z}\,dz\\
 +&\int\limits_{1}^{i\infty+1}\phi_0\Big(\frac{-1}{z-1}\Big)\,(z-1)^2\,e^{\pi i r^2 \,z}\,dz.
\end{align*}
From \eqref{eqn: phi0 transform} we deduce that if $r>\sqrt{2}$ then
$\phi_0\Big(\frac{-1}{z}\Big)\,z^2\,e^{\pi i r^2 \,z}\to 0$ as $\Im(z)\to\infty$. Therefore, we can deform the paths of integration
and rewrite
\begin{align*}
 d(r)=&\int\limits_{-1}^{i}\phi_0\Big(\frac{-1}{z+1}\Big)\,(z+1)^2\,e^{\pi i r^2 \,z}\,dz
 +\int\limits_{i}^{i\infty}\phi_0\Big(\frac{-1}{z+1}\Big)\,(z+1)^2\,e^{\pi i r^2 \,z}\,dz\\
 -2&\int\limits_{0}^{i}\phi_0\Big(\frac{-1}{z}\Big)\,z^2\,e^{\pi i r^2 \,z}\,dz
 -2\int\limits_{i}^{i\infty}\phi_0\Big(\frac{-1}{z}\Big)\,z^2\,e^{\pi i r^2 \,z}\,dz\\
 +&\int\limits_{1}^{i}\phi_0\Big(\frac{-1}{z-1}\Big)\,(z-1)^2\,e^{\pi i r^2 \,z}\,dz
 +\int\limits_{i}^{i\infty}\phi_0\Big(\frac{-1}{z-1}\Big)\,(z-1)^2\,e^{\pi i r^2 \,z}\,dz.
\end{align*}
Now from \eqref{eqn: phi0 transform} we find
\begin{align*}&\phi_0\Big(\frac{-1}{z+1}\Big)\,(z+1)^2-2\phi_0\Big(\frac{-1}{z}\Big)\,z^2+
\phi_0\Big(\frac{-1}{z-1}\Big)\,(z-1)^2=\\
&\phi_0(z+1)\,(z+1)^2-2\phi_0(z)\,z^2+\phi_0(z-1)\,(z-1)^2\\
&-\frac{12i}{\pi}\,\Big(\phi_{-2}(z+1)\,(z+1)-2\phi_{-2}(z)\,z+\phi_{-2}(z-1)\,(z-1)\Big)\\
&-\frac{36}{\pi^2}\Big(\phi_{-4}(z+1)-2\phi_{-4}(z)+\phi_{-4}(z-1)\Big)=\\
&2\phi_0(z).
 \end{align*}
 Thus, we obtain
 \begin{align*}
 d(r)=&\int\limits_{-1}^{i}\phi_0\Big(\frac{-1}{z+1}\Big)\,(z+1)^2\,e^{\pi i r^2 \,z}\,dz
 -2\int\limits_{0}^{i}\phi_0\Big(\frac{-1}{z}\Big)\,z^2\,e^{\pi i r^2 \,z}\,dz\\
 +&\int\limits_{1}^{i}\phi_0\Big(\frac{-1}{z-1}\Big)\,(z-1)^2\,e^{\pi i r^2 \,z}\,dz
 +2\int\limits_{i}^{i\infty}\phi_0(z)\,e^{\pi i r^2 \,z}\,dz=a(r).
\end{align*}
This finishes the proof.
\end{proof}
Finally, we find another convenient integral representation for $a$ and compute values of $a(r)$ at $r=0$ and $r=\sqrt{2}$.
\begin{proposition}\label{prop: a another integral}
For $r\geq0$ we have
\begin{align}\label{eqn: a another integral}a(r)=&4i\,\sin(\pi r^2/2)^2\,\Bigg(\frac{36}{\pi^3\,(r^2-2)}-\frac{8640}{\pi^3\,r^4}+\frac{18144}{\pi^3\,r^2}\\ +&\int\limits_0^\infty\,\left(t^2\,\phi_0\Big(\frac{i}{t}\Big)-\frac{36}{\pi^2}\,e^{2\pi t}+\frac{8640}{\pi}\,t-\frac{18144}{\pi^2}\right)\,e^{-\pi r^2 t}\,dt \Bigg) .\notag\end{align}
The integral converges absolutely for all $r\in\R_{\geq 0}$.
\end{proposition}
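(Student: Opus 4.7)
The plan is to derive the claim from Proposition~\ref{prop: a(r) double zeroes} by converting its contour integral to a Laplace transform on the positive real axis and isolating the non-decaying pieces of the integrand. For $r>\sqrt{2}$, I would substitute $z=it$ (so $-1/z=i/t$, $z^2=-t^2$, $dz=i\,dt$) in the identity of Proposition~\ref{prop: a(r) double zeroes} to obtain
$$a(r)=4i\,\sin(\pi r^2/2)^2\int_0^\infty t^2\,\phi_0(i/t)\,e^{-\pi r^2 t}\,dt.$$
The naive integrand diverges at $t\to\infty$, so the next step is to extract its polynomial and exponentially growing part.

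Applying the transformation law \eqref{eqn: phi0 transform} at $z=it$ yields
$$t^2\,\phi_0(i/t)=t^2\,\phi_0(it)-\frac{12\,t}{\pi}\,\phi_{-2}(it)+\frac{36}{\pi^2}\,\phi_{-4}(it),$$
and the Fourier expansions \eqref{eqn: phi fourier4}--\eqref{eqn: phi fourier0} give $\phi_{-4}(it)=e^{2\pi t}+504+O(e^{-2\pi t})$, $\phi_{-2}(it)=720+O(e^{-2\pi t})$, $\phi_0(it)=O(e^{-2\pi t})$ as $t\to\infty$. Together they produce the asymptotic
$$t^2\,\phi_0(i/t)=\frac{36}{\pi^2}\,e^{2\pi t}-\frac{8640}{\pi}\,t+\frac{18144}{\pi^2}+O(t^2 e^{-2\pi t}).$$
Subtracting these three leading terms leaves a function that is $O(t^2 e^{-2\pi t})$ at infinity and bounded near $t=0$ (using $\phi_0(i/t)=O(e^{-2\pi/t})$ as $t\to 0^+$), hence integrable against $e^{-\pi r^2 t}$ for every $r\ge 0$. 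For $r>\sqrt{2}$, the Laplace transforms of the three subtracted pieces are elementary, giving $\frac{36}{\pi^3(r^2-2)}$, $-\frac{8640}{\pi^3 r^4}$, and $\frac{18144}{\pi^3 r^2}$; adding them back recovers \eqref{eqn: a another integral} for $r>\sqrt{2}$.

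The final step is analytic continuation in $r^2$. The function $a(r)$ is real-analytic in $r^2$ on $[0,\infty)$ by the uniform convergence of the defining integrals \eqref{eqn: a(r) definition}. On the right-hand side, the remainder integral converges absolutely for every $r\ge 0$ and defines an analytic function of $r^2$, while the apparent poles of $(r^2-2)^{-1}$, $r^{-4}$, $r^{-2}$ at $r^2=2$ and $r^2=0$ are removable because $\sin(\pi r^2/2)^2$ has a double zero (in the variable $r^2$) at each of these points. Both sides are therefore analytic in $r^2$ on $[0,\infty)$ and coincide on $r>\sqrt{2}$, so they coincide everywhere. The main obstacle is the asymptotic bookkeeping in the middle step: one must verify that exactly the three terms $\frac{36}{\pi^2}e^{2\pi t}$, $-\frac{8640}{\pi}t$, and $\frac{18144}{\pi^2}$ need to be isolated, so that the remainder decays and the pole cancellations in the analytic-continuation step match up cleanly.
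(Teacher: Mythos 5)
Your proposal is correct and takes essentially the same approach as the paper: starting from the representation in Proposition~\ref{prop: a(r) double zeroes}, substituting $z=it$ to obtain a Laplace transform, isolating the three non-decaying terms $\frac{36}{\pi^2}e^{2\pi t}-\frac{8640}{\pi}t+\frac{18144}{\pi^2}$ from the asymptotic expansion of $t^2\phi_0(i/t)$ via the transformation law \eqref{eqn: phi0 transform} and the $q$-expansions, computing their elementary Laplace transforms, and then extending to all $r\geq 0$ by analyticity in $r^2$ (with the removability of the apparent poles at $r^2=0$ and $r^2=2$ ensured by the double zero of $\sin(\pi r^2/2)^2$). You spell out the bookkeeping behind \eqref{eqn: phi asymptotic} slightly more explicitly than the paper does, but the structure and key steps coincide.
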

\begin{proof}
Suppose that $r>\sqrt{2}$. Then by Proposition~\ref{prop: a(r) double zeroes}
$$a(r)=4i\,\sin(\pi r^2/2)^2\,\int\limits_{0}^{\infty}\phi_0(i/t)\,t^2\,e^{-\pi r^2 t}\,dt. $$
From \eqref{eqn: phi fourier0}--\eqref{eqn: phi0 transform} we obtain
\begin{equation}\label{eqn: phi asymptotic}
\phi_0(i/t)\,t^2=\frac{36}{\pi^2}\,e^{2 \pi t}-\frac{8640}{\pi}\,t+\frac{18144}{\pi^2}+O(t^2\,e^{-2\pi t})\quad\mbox{as}\;t\to\infty.
\end{equation}
For $r>\sqrt{2}$ we have
\begin{equation}
\int\limits_0^\infty \left(\frac{36}{\pi^2}\,e^{2 \pi t}+\frac{8640}{\pi}\,t+\frac{18144}{\pi^2}\right)\,e^{-\pi r^2 t}\,dt
=\frac{36}{\pi^3\,(r^2-2)}-\frac{8640}{\pi^3\,r^4}+\frac{18144}{\pi^3\,r^2}.\end{equation}
Therefore, the identity \eqref{eqn: a another integral} holds for $r>\sqrt{2}$.

On the other hand, from the definition~\eqref{eqn: a(r) definition} we see that $a(r)$ is analytic in some neighborhood of $[0,\infty)$. The asymptotic expansion~\eqref{eqn: phi asymptotic} implies that the right hand side of \eqref{eqn: a another integral} is also analytic in some neighborhood of $[0,\infty)$. Hence, the identity \eqref{eqn: a another integral} holds on the whole interval $[0,\infty)$. This finishes the proof of the proposition.
\end{proof}
From the identity~\eqref{eqn: a another integral} we see that the values $a(r)$ are in $i\R$ for all $r\in\R_{\geq0}$. In particular, we have
\begin{proposition}\label{prop: a values}
We have
\begin{equation}
a(0)=\frac{-i\,8640}{\pi}\qquad
a(\sqrt{2})=0\qquad
a^\prime(\sqrt{2})=\frac{i\,72\sqrt{2}}{\pi}.
\end{equation}
\end{proposition}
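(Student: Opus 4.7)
The plan is to read all three values off the integral representation from Proposition \ref{prop: a another integral}. Write $a(r) = 4i\sin(\pi r^2/2)^2\,F(r)$, where $F(r)$ denotes the bracketed expression. On $[0,\infty)$ the function $F$ has only two possible singularities: a fourth-order pole at $r = 0$, coming from $-8640/(\pi^3 r^4) + 18144/(\pi^3 r^2)$, and a simple pole at $r = \sqrt 2$, coming from $36/(\pi^3(r^2 - 2))$. The prefactor $\sin(\pi r^2/2)^2$ has a zero of order $4$ in $r$ at $r = 0$ and a zero of order $2$ at $r = \sqrt 2$, so in each case the poles are killed. The remaining piece of $F$, the definite integral, is analytic in $r^2$ in a neighbourhood of $[0,\infty)$ thanks to \eqref{eqn: phi asymptotic}, which ensures that the integrand decays like $t^2 e^{-2\pi t}$ as $t \to \infty$ and is bounded as $t \to 0$.

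For $a(0)$, I expand $\sin(\pi r^2/2)^2 = \pi^2 r^4/4 + O(r^8)$. Among the terms of $F$, only $-8640/(\pi^3 r^4)$ survives the limit after multiplication by this quartic zero; the $18144/(\pi^3 r^2)$ term contributes $O(r^2)$ and the regular parts (including the integral evaluated at $r=0$) contribute $O(r^4)$. Passing to the limit then gives $a(0) = 4i\,(\pi^2/4)\,(-8640/\pi^3) = -8640\,i/\pi$.

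For $a(\sqrt 2)$ and $a'(\sqrt 2)$, I set $u = r^2 - 2$. Then $\sin(\pi r^2/2) = -\sin(\pi u/2)$, so $\sin(\pi r^2/2)^2 = \pi^2 u^2/4 + O(u^4)$. Only $36/(\pi^3 u)$ is singular at $u = 0$, and $4i\sin(\pi r^2/2)^2 \cdot 36/(\pi^3 u) = 36\,i u/\pi + O(u^3)$; every other term of $F$ is regular near $u = 0$ and contributes $O(u^2)$ after multiplication by $\sin^2$. Consequently
$$a(r) = \frac{36\,i\,(r^2 - 2)}{\pi} + O\bigl((r^2 - 2)^2\bigr)\quad\text{near } r = \sqrt 2,$$
which immediately yields $a(\sqrt 2) = 0$ and, after differentiating and evaluating at $r=\sqrt 2$, $a'(\sqrt 2) = (36i/\pi)\cdot 2\sqrt 2 = 72\,i\sqrt 2/\pi$.

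The main obstacle is simply the bookkeeping of which terms survive each limit and the justification, via \eqref{eqn: phi asymptotic}, that the integral part of $F$ is analytic (and so only contributes to the regular Taylor coefficients) at $r = 0$ and at $r = \sqrt 2$. Once this is in place, the rest is a matter of Taylor expanding $\sin(\pi r^2/2)^2$ to the right order and matching singular parts.
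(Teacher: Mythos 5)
Your proof is correct and follows exactly the route the paper intends: the paper's proof of this proposition is the single sentence that the values follow immediately from Proposition~\ref{prop: a another integral}, and your Taylor expansion of $\sin(\pi r^2/2)^2$ against the singular terms $-8640/(\pi^3 r^4)$, $18144/(\pi^3 r^2)$, and $36/(\pi^3(r^2-2))$ is precisely the bookkeeping that sentence is compressing.
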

\begin{proof}
These identities follow immediately from the previous proposition.
\end{proof}

Now we construct function $b$. To this end we consider the modular form
\begin{equation}\label{eqn: h define}
  h\,:=\,128 \frac{\theta_{00}^4+\theta_{01}^4}{\theta_{10}^8}.
\end{equation}
It is easy to see that $h\in M^!_{-2}(\Gamma_0(2))$. Indeed, first we check that $h|_{-2}\gamma=h$
for all $\gamma\in\Gamma_0(2)$. Since the group $\Gamma_0(2)$ is generated by elements
$\left(\begin{smallmatrix}1&0\\2&1\end{smallmatrix}\right)$ and $\left(\begin{smallmatrix}1&1\\0&1\end{smallmatrix}\right)$
it suffices to check that $h$ is invariant under their action. This follows immediately
from \eqref{eqn: theta transform S}--\eqref{eqn: theta transform T} and \eqref{eqn: h define}. Next we analyze the poles of $h$.
It is known \cite[Chapter~I Lemma~4.1]{Mumford} that $\theta_{10}$ has no zeros in the upper-half plane and hence $h$ has poles only at the cusps.
At the cusp $i\infty$ this modular form has the Fourier expansion
\begin{equation*}
h(z)\,=\,q^{-1} + 16 - 132 q + 640 q^2 - 2550 q^3+O(q^4).
\end{equation*}
Let $I=\left(\begin{smallmatrix}1&0\\0&1\end{smallmatrix}\right)$,
$T=\left(\begin{smallmatrix}1&1\\0&1\end{smallmatrix}\right)$, and
$S=\left(\begin{smallmatrix}0&-1\\1&0\end{smallmatrix}\right)$ be elements of $\Gamma(1)$.
We define the following three functions
\begin{align}
  \psi_I\,:=\,&h-h|_{-2}ST,\label{eqn: psi I define}\\
  \psi_T\,:=\,&\psi_I|_{-2}T ,\label{eqn: psi T define}\\
  \psi_S\,:=\,&\psi_I|_{-2}S. \label{eqn: psi S define}
\end{align}
More explicitly, we have
\begin{align}
\psi_I\,=\,&128\,\frac{\theta_{00}^4+\theta_{01}^4}{\theta_{10}^8}\,+\,128
              \frac{\theta_{01}^4-\theta_{10}^4}{\theta_{00}^8},\label{eqn: psi I explicit}\\
\psi_T\,=\,&128\,\frac{\theta_{00}^4+\theta_{01}^4}{\theta_{10}^8}\,+
              \,128\,\frac{\theta_{00}^4+\theta_{10}^4}{\theta_{01}^8},\label{eqn: psi T explicit}\\
\psi_S\,=\,&-128\,\frac{\theta_{00}^4+\theta_{10}^4}{\theta_{01}^8}-128\,
              \frac{\theta_{10}^4-\theta_{01}^4}{\theta_{00}^8}.\label{eqn: psi S explicit}
\end{align}
The Fourier expansions of these functions are
\begin{align}
  \psi_I(z)\,=\,&q^{-1} + 144 - 5120 q^{1/2} + 70524 q - 626688 q^{3/2} + 4265600 q^2  + O(q^{5/2}) ,\label{eqn: psi fourier I}\\
  \psi_T(z)\,=\,&q^{-1} + 144 + 5120 q^{1/2} + 70524 q + 626688 q^{3/2} + 4265600 q^2  + O(q^{5/2}) ,\label{eqn: psi fourier T}\\
  \psi_S(z)\,=\,&-10240 q^{1/2} - 1253376 q^{3/2} - 48328704 q^{5/2} - 1059078144 q^{7/2}+O(q^{9/2}).\label{eqn: psi fourier S}
\end{align}
For $x\in\R^8$ define
\begin{align}\label{eqn: b(r) definition}
  b(x):= & \int\limits_{-1}^{i}\psi_T(z)\,e^{\pi i \|x\|^2 z}\,dz
    + \int\limits_{1}^{i}\psi_T(z)\,e^{\pi i \|x\|^2 z}\,dz \\
  -& 2\,\int\limits_{0}^{i}\psi_I(z)\,e^{\pi i \|x\|^2 z}\,dz
  - 2\,\int\limits_{i}^{i\infty}\psi_S(z)\,e^{\pi i \|x\|^2 z}\,dz \nonumber.
\end{align}

Now we prove that $b$ satisfies condition \eqref{eqn: b fourier}.
\begin{proposition}\label{prop: b(r) Fourier} The function $b$ defined by \eqref{eqn: b(r) definition} belongs to the Schwartz space and satisfies
  $$\widehat{b}(x)=-b(x). $$
\end{proposition}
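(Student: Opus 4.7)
The plan is to mirror the proof of Proposition~\ref{prop: a(r) Fourier} essentially verbatim, with $\psi_T$, $\psi_I$, $\psi_S$ taking over the roles played by $\phi_0\bigl(\tfrac{-1}{z\pm 1}\bigr)(z\pm 1)^2$, $\phi_0\bigl(\tfrac{-1}{z}\bigr)z^2$ and $\phi_0(z)$ respectively on the four contour segments of~\eqref{eqn: b(r) definition}.

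For the Schwartz property, each of $\psi_I,\psi_T,\psi_S$ is a weakly holomorphic form of weight $-2$, so its Fourier coefficients admit a Rademacher-type convergent expansion (the analogue of~\eqref{eqn: j Fourier asymptotic} mentioned at the end of Section~3) and are bounded by $O(e^{C\sqrt{n}})$. From the leading $q^{1/2}$ term of~\eqref{eqn: psi fourier S} one gets $\psi_S(z)=O(e^{-\pi\Im z})$ as $\Im z\to\infty$; the relation $\psi_I=\psi_S|_{-2}S$ then gives $\psi_I(i\epsilon)=O(\epsilon^2 e^{-\pi/\epsilon})$ as $\epsilon\to 0^+$, and by~\eqref{eqn: psi T define} the function $\psi_T(z)=\psi_I(z+1)$ therefore vanishes super-exponentially at both cusps $z=\pm 1$. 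Plugging these bounds into~\eqref{eqn: b(r) definition} yields the same $K_1(2\sqrt{2}\pi r)$-plus-Gaussian-tail estimate derived in Proposition~\ref{prop: a(r) Fourier}, and the same argument applies to every derivative in $r$.

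For the eigenfunction identity I interchange the Fourier transform in $x$ with the contour integrations in $z$ (justified by absolute convergence of the resulting double integral) and apply the Gaussian Fourier formula~\eqref{eqn: Gaussian Fourier}, so each $e^{\pi i\|x\|^2 z}$ is replaced by $z^{-4}e^{\pi i\|y\|^2(-1/z)}$. The substitution $w=-1/z$ then sends $z^{-4}dz$ to $w^2\,dw$ and exchanges the four paths in pairs: $[-1,i]\leftrightarrow[1,i]$ (the two $\psi_T$ contours) and $[0,i]\leftrightarrow[i,i\infty]$ with reversed orientation (the $\psi_I$ and $\psi_S$ contours). To convert each transformed integrand $w^2\psi_\bullet(-1/w)$ back into the original $\psi_\bullet(w)$ I need three slash-action identities in weight $-2$:
\begin{align*}
\psi_I|_{-2}S\,&=\,\psi_S,\\
\psi_S|_{-2}S\,&=\,\psi_I,\\
\psi_T|_{-2}S\,&=\,-\psi_T.
\end{align*}
The first is the definition~\eqref{eqn: psi S define}; the second is automatic from $S^2=I$ in $\mathrm{PSL}_2(\Z)$. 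Applying these and absorbing the orientation signs on the two reversed paths recovers $-b(y)$ term by term.

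The main obstacle is the third identity, which supplies precisely the sign that distinguishes $b$ as a $(-1)$-eigenfunction of the Fourier transform from the $(+1)$-eigenfunction $a$. I would verify it directly by substituting the explicit formula~\eqref{eqn: psi T explicit} and applying~\eqref{eqn: theta transform S}: under $w\mapsto -1/w$ the three $\theta^4$'s each acquire a factor $-w^2$ while $\theta_{01}^4\leftrightarrow\theta_{10}^4$ interchange, so the two summands $(\theta_{00}^4+\theta_{01}^4)/\theta_{10}^8$ and $(\theta_{00}^4+\theta_{10}^4)/\theta_{01}^8$ of $\psi_T$ swap with a common overall factor of $-w^{-2}$; multiplication by the slash factor $w^2$ yields $w^2\psi_T(-1/w)=-\psi_T(w)$, as required. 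All remaining steps are the same bookkeeping as in the proof for $a$.
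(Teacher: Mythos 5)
Your proposal is correct and follows the paper's proof essentially verbatim: the Schwartz property via the decay of $\psi_S$ (obtained by sending the contours through $S$), and the eigenfunction identity via the Gaussian Fourier formula, the substitution $w=-1/z$, and the three slash identities $\psi_I|_{-2}S=\psi_S$, $\psi_S|_{-2}S=\psi_I$, $\psi_T|_{-2}S=-\psi_T$. The only minor quibbles are cosmetic: the Bessel bound for $b$ is $K_1(2\pi r)$ rather than $K_1(2\sqrt{2}\pi r)$ (because $\psi_S$ decays like $e^{-\pi t}$, not $e^{-2\pi t}$), and near $z=1$ you also need the $2$-periodicity of $\psi_I$ (visible from its $q^{1/2}$-expansion) to reduce to the cusp at $0$; your explicit theta-function verification of $\psi_T|_{-2}S=-\psi_T$ is a welcome addition, since the paper merely asserts it.
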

\begin{proof}
Here, we repeat the arguments used in the proof of Proposition~\ref{prop: a(r) Fourier}. First we show that $b$ is a Schwartz function. We have
\begin{align*}
 &\int\limits_{-1}^{i}\psi_T(z)\,e^{\pi i r^2 z}\,dz=\int\limits_{0}^{i+1}\psi_I(z)\,e^{\pi i r^2 (z-1)}\,dz=\\
 &\int\limits_{i\infty}^{-1/(i+1)}\psi_I\Big(\frac{-1}{z}\Big)\,e^{\pi i r^2 (-1/z-1)}\,z^{-2}\,dz=\int\limits_{i\infty}^{-1/(i+1)}\psi_S(z)\,z^{-4}\,e^{\pi i r^2 (-1/z-1)}\,dz.\\
\end{align*}
There exists a positive constant $C$ such that
$$|\psi_S(z)|\leq C\,e^{-\pi\,\Im{z}}\quad\mbox{for }\;\Im{z}>\frac12.$$
Thus, as in the proof of Proposition~\ref{prop: a(r) Fourier} we estimate the first summand in the left-hand side of~\eqref{eqn: b(r) definition}
$$\Bigg|\int\limits_{-1}^i \psi_T(z)\,e^{\pi i r^2 z}\,dz \Bigg|\leq C_1\,r\,K_1(2\pi r).$$
We combine this inequality with analogous estimates for the other three summands and obtain
$$|b(r)|\leq C_2\,r\,K_1(2\pi r)+C_3\,\frac{e^{-\pi(r^2+1)}}{r^2+1}.$$
Here $C_1$, $C_2$, and $C_3$ are some positive constants. Similar estimates hold for all derivatives $\frac{d^k}{d^k r} b(r)$.

Now we prove that $b$ is an eigenfunction of the Fourier transform. We use identity~\eqref{eqn: Gaussian Fourier} and interchange contour integration in $z$ and Fourier transform in $x$. Thus we obtain
\begin{align*}
  \mathcal{F}(b)(x)= & \int\limits_{-1}^{i}\psi_T(z)\,z^{-4}\,e^{\pi i \|x\|^2 (\frac{-1}{z})}\,dz
    + \int\limits_{1}^{i}\psi_T(z)\,z^{-4}\,e^{\pi i \|x\|^2 (\frac{-1}{z})}\,dz \\
  -& 2\,\int\limits_{0}^{i}\psi_I(z)\,z^{-4}\,e^{\pi i \|x\|^2 (\frac{-1}{z})}\,dz
  - 2\,\int\limits_{i}^{i\infty}\psi_S(z)\,z^{-4}\,e^{\pi i \|x\|^2 (\frac{-1}{z})}\,dz.
\end{align*}
We make the change of variables $w=\frac{-1}{z}$ and arrive at
\begin{align*}
  \mathcal{F}(b)(x)= & \int\limits_{1}^{i}\psi_T\Big(\frac{-1}{w}\Big)\,w^{2}\,e^{\pi i \|x\|^2 w}\,dw
    + \int\limits_{-1}^{i}\psi_T\Big(\frac{-1}{w}\Big)\,w^{2}\,e^{\pi i \|x\|^2 w}\,dw \\
  -& 2\,\int\limits_{i\infty}^{i}\psi_I\Big(\frac{-1}{w}\Big)\,w^{2}\,e^{\pi i \|x\|^2 w}\,dw
  - 2\,\int\limits_{i}^{0}\psi_S\Big(\frac{-1}{w}\Big)\,w^{2}\,e^{\pi i \|x\|^2 w}\,dw.
\end{align*}
Now we observe that the definitions \eqref{eqn: psi I define}--\eqref{eqn: psi S define}   imply
\begin{align*}\psi_T|_{-2}S=&-\psi_T ,\\
\psi_I|_{-2}S=&\psi_S ,\\
\psi_S|_{-2}S=&\psi_I. \\
\end{align*}
Therefore, we arrive at
\begin{align*}
  \mathcal{F}(b)(x)= & \int\limits_{1}^{i}-\psi_T(z)\,e^{\pi i \|x\|^2 z}\,dz
    + \int\limits_{-1}^{i}-\psi_T(z)\,e^{\pi i \|x\|^2 z}\,dz \\
  +& 2\,\int\limits_{i}^{i\infty}\psi_S(z)\,e^{\pi i \|x\|^2 z}\,dz
  + 2\,\int\limits_{0}^{i}\psi_I(z)\,e^{\pi i \|x\|^2 w}\,dw.
\end{align*}
Now from~\eqref{eqn: b(r) definition} we see that
$$ \mathcal{F}(b)(x)=-b(x). $$
\end{proof}
Now we regard the radial function  $b$ as a function on $\R_{\geq0}$. We check that $b$ has double roots at $\Lambda_8$-points.
\begin{proposition}\label{prop: b(r) double zeroes} For $r>\sqrt{2}$ function $b(r)$ can be expressed as
\begin{equation}\label{eqn: b double zeroes}
  b(r)=-4\sin(\pi r^2/2)^2\,\int\limits_{0}^{i\infty}\psi_I(z)\,e^{\pi i r^2 \,z}\,dz.
\end{equation}
\end{proposition}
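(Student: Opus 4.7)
Let $d(r)$ denote the right-hand side of \eqref{eqn: b double zeroes}; the plan is to mirror the proof of Proposition~\ref{prop: a(r) double zeroes}. Using
\[
-4\sin^2(\pi r^2/2)=e^{i\pi r^2}-2+e^{-i\pi r^2}
\]
together with the substitutions $z\mapsto z-1$ and $z\mapsto z+1$ in the first and third of the three resulting integrals, $d(r)$ rewrites as
\[
\int_{-1}^{i\infty-1}\!\psi_I(z+1)\,e^{\pi i r^2 z}\,dz \;-\;2\!\int_0^{i\infty}\!\psi_I(z)\,e^{\pi i r^2 z}\,dz \;+\;\int_1^{i\infty+1}\!\psi_I(z-1)\,e^{\pi i r^2 z}\,dz.
\]
Each piece converges absolutely for $r>\sqrt{2}$, since \eqref{eqn: psi fourier I}--\eqref{eqn: psi fourier T} show that $\psi_I(z),\psi_T(z)=O(e^{-2\pi i z})$ as $\Im z\to\infty$, and so each integrand decays like $e^{(2-r^2)\pi\,\Im z}$.

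Two elementary facts about $\psi_I$ then do most of the work. First, by definition $\psi_I(z+1)=\psi_T(z)$ (this is \eqref{eqn: psi T define}). Second, $\psi_I$ is $2$-periodic: the expansion \eqref{eqn: psi fourier I} contains only powers $q^{n/2}$, and equivalently $\left(\begin{smallmatrix}1 & 0\\-2 & 1\end{smallmatrix}\right)\in\Gamma_0(2)$ makes $h|_{-2}ST$ invariant under $z\mapsto z+2$. Hence $\psi_I(z-1)=\psi_I(z+1)=\psi_T(z)$, and the three integrals involve only $\psi_T$ and $\psi_I$. Next we deform each contour so that it passes through $z=i$, splitting it into a finite arc and a vertical tail starting at $i$; the horizontal closing segments at imaginary height $t$ contribute $O(e^{(2-r^2)\pi t})$ and vanish as $t\to\infty$.

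After the deformation the finite arcs produce $\int_{-1}^i\psi_T\,e^{\pi i r^2 z}dz$, $\int_1^i\psi_T\,e^{\pi i r^2 z}dz$, and $-2\int_0^i\psi_I\,e^{\pi i r^2 z}dz$, exactly matching the first three terms of \eqref{eqn: b(r) definition}, while the tails combine into $2\int_i^{i\infty}(\psi_T-\psi_I)\,e^{\pi i r^2 z}dz$. To match the remaining term $-2\int_i^{i\infty}\psi_S\,e^{\pi i r^2 z}dz$ of $b(r)$, what is needed is the linear identity $\psi_I=\psi_T+\psi_S$, checked by direct subtraction from \eqref{eqn: psi I explicit}--\eqref{eqn: psi S explicit}: the two occurrences of $128(\theta_{00}^4+\theta_{10}^4)/\theta_{01}^8$ in $\psi_T$ and $-\psi_S$ cancel, and the surviving theta-quotients reassemble into $\psi_I$. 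The main difficulty is bookkeeping rather than any deep obstruction: the $2$-periodicity of $\psi_I$, the relations $\psi_I(z\pm1)=\psi_T(z)$, the three contour deformations, and the identity $\psi_I=\psi_T+\psi_S$ must all be aligned correctly, but no new modular input is needed beyond the theta transformations \eqref{eqn: theta transform S}--\eqref{eqn: theta transform T} that already went into defining $\psi_I$, $\psi_T$, and $\psi_S$.
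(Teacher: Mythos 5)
Your proof is correct and follows the paper's argument essentially step for step: unfold $-4\sin^2(\pi r^2/2)$ into $e^{i\pi r^2}-2+e^{-i\pi r^2}$, shift variables to get translated copies of $\psi_I$, use $\psi_I(z\pm 1)=\psi_T(z)$ (via the $2$-periodicity of $\psi_I$), deform the three contours through $z=i$, and collapse the vertical tails with the identity $\psi_I=\psi_T+\psi_S$. The one cosmetic difference is that you verify $\psi_I=\psi_T+\psi_S$ by direct cancellation in the explicit theta-quotients \eqref{eqn: psi I explicit}--\eqref{eqn: psi S explicit}, whereas the paper derives it from the $\Gamma_0(2)$-invariance of $h$ via the slash-operator algebra; both are valid and equally short.
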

\begin{proof}
We denote the right hand side of~\eqref{eqn: b double zeroes} by $c(r)$. First, we check that $c(r)$ is well-defined. We have
\begin{align*}
\psi_I(it)=O(t^{2}\,e^{-\pi/t})\quad\mbox{as}\;t\to 0,\\
 \psi_I(it)=O(e^{2\pi t})\quad\mbox{as}\;t\to\infty.
\end{align*}
Therefore, the integral~\eqref{eqn: b double zeroes} converges for $r>\sqrt{2}$.
Then we rewrite it in the following way:
$$c(r)=\int\limits_{-1}^{i\infty-1}\psi_I(z+1)\,e^{\pi i r^2 \,z}\,dz-2\int\limits_{0}^{i\infty}\psi_I(z)\,e^{\pi i r^2 \,z}\,dz+
\int\limits_{1}^{i\infty+1}\psi_I(z-1)\,e^{\pi i r^2 \,z}\,dz.$$
From the Fourier expansion~\eqref{eqn: psi fourier I} we know that $\psi_I(z)=e^{-2\pi i z}+O(1)$ as $\Im(z)\to\infty$.
By assumption $r^2>2$, hence we can deform the path of integration and write
\begin{align}\label{eqn: inside proof 1}
\int\limits_{-1}^{i\infty-1}\psi_I(z+1)\,e^{\pi i r^2 \,z}\,dz=&
\int\limits_{-1}^{i}\psi_T(z)\,e^{\pi i r^2 \,z}\,dz+\int\limits_{i}^{i\infty}\psi_T(z)\,e^{\pi i r^2 \,z}\,dz,\\
\int\limits_{1}^{i\infty+1}\psi_I(z-1)\,e^{\pi i r^2 \,z}\,dz=&
\int\limits_{-1}^{i}\psi_T(z)\,e^{\pi i r^2 \,z}\,dz+\int\limits_{i}^{i\infty}\psi_T(z)\,e^{\pi i r^2 \,z}\,dz.
\end{align}
We have
\begin{align}\label{eqn: c1}c(r)=&\int\limits_{-1}^{i}\psi_T(z)\,e^{\pi i r^2 \,z}\,dz+\int\limits_{1}^{i}\psi_T(z)\,e^{\pi i r^2 \,z}\,dz
-2\int\limits_{0}^{i}\psi_I(z)\,e^{\pi i r^2 \,z}\,dz\\
&+2\int\limits_{i}^{i\infty}(\psi_T(z)-\psi_I(z))\,e^{\pi i r^2 \,z}\,dz.\nonumber
 \end{align}

 Next, we check that the functions $\psi_I,\psi_T$, and $\psi_S$ satisfy the following identity:
\begin{equation}\label{eqn: c2}\psi_T+\psi_S=\psi_I.\end{equation}
Indeed, from definitions \eqref{eqn: psi I define}-\eqref{eqn: psi S define} we get
\begin{align*}\psi_T+\psi_S=&(h-h|_{-2}ST)|_{-2}T+(h-h|_{-2}ST)|_{-2}S\\
=&h|_{-2}T-h|_{-2}ST^2+h|_{-2}S-h|_{-2}STS.\end{align*}
Note that $ST^2S$ belongs to $\Gamma_0(2)$. Thus, since $h\in M^!_{-2}\Gamma_0(2)$ we get
$$\psi_T+\psi_S=h|_{-2}T-h|_{-2}STS. $$
Now we observe that $T$ and $STS(ST)^{-1}$ are also in $\Gamma_0(2)$. Therefore,
$$\psi_T+\psi_S=h|_{-2}T-h|_{-2}STS=h-h|_{-2}ST=\psi_I.$$

Combining \eqref{eqn: c1} and \eqref{eqn: c2} we find
\begin{align*}c(r)=&\int\limits_{-1}^{i}\psi_T(z)\,e^{\pi i r^2 \,z}\,dz+\int\limits_{1}^{i}\psi_T(z)\,e^{\pi i r^2 \,z}\,dz
-2\int\limits_{0}^{i}\psi_I(z)\,e^{\pi i r^2 \,z}\,dz\\
&-2\int\limits_{i}^{i\infty}\psi_S(z)\,e^{\pi i r^2 \,z}\,dz\\
=&b(r).
 \end{align*}
\end{proof}
At the end of this section we find another integral representation of $b(r)$ for $r\in\R_{\geq0}$ and compute special values of $b$.
\begin{proposition}\label{prop: b another integral}
For $r\geq0$ we have
\begin{equation}\label{eqn: b another integral}b(r)=4i\,\sin(\pi r^2/2)^2\,\left(\frac{144}{\pi\,r^2}+\frac{1}{\pi\,(r^2-2)}+\int\limits_0^\infty\,\left(\psi_I(it)-144-e^{2\pi t}\right)\,e^{-\pi r^2 t}\,dt\right).\end{equation}
The integral converges absolutely for all $r\in\R_{\geq 0}$.
\end{proposition}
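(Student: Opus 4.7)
The plan is to mimic the proof of Proposition~\ref{prop: a another integral} almost verbatim, using Proposition~\ref{prop: b(r) double zeroes} as the starting point and the Fourier expansion \eqref{eqn: psi fourier I} of $\psi_I$ to extract the principal growth as $t\to\infty$. First I would substitute $z=it$ in \eqref{eqn: b double zeroes}; since $dz=i\,dt$, this converts the formula for $r>\sqrt{2}$ into a real integral of the shape
\begin{equation*}
b(r)=\pm\,4i\,\sin(\pi r^2/2)^2\int\limits_0^\infty \psi_I(it)\,e^{-\pi r^2 t}\,dt,
\end{equation*}
with the sign determined by bookkeeping of the $i$ from $dz=i\,dt$ against the $-4$ in Proposition~\ref{prop: b(r) double zeroes}.

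Next I would split the integrand by subtracting the two divergent parts dictated by \eqref{eqn: psi fourier I}, namely $e^{2\pi t}$ (from the $q^{-1}$ term) and $144$ (from the constant term). For $r>\sqrt{2}$ one has $\pi r^2>2\pi$, so these subtracted pieces can be integrated in closed form:
\begin{align*}
\int\limits_0^\infty e^{2\pi t}\,e^{-\pi r^2 t}\,dt\,=\,\frac{1}{\pi(r^2-2)},\qquad
\int\limits_0^\infty 144\,e^{-\pi r^2 t}\,dt\,=\,\frac{144}{\pi r^2}.
\end{align*}
Adding these back produces exactly the bracketed expression in \eqref{eqn: b another integral}, so the identity holds (up to the sign check) for $r>\sqrt{2}$.

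To justify the absolute convergence of the remaining integral for every $r\ge 0$, I need two-sided control on $\psi_I(it)$. As $t\to\infty$ the expansion \eqref{eqn: psi fourier I} gives $\psi_I(it)-144-e^{2\pi t}=O(e^{-\pi t})$, so multiplied by $e^{-\pi r^2 t}$ the integrand is $O(e^{-(1+r^2)\pi t})$ for every $r\ge 0$. As $t\to 0$ I would invoke the relation $\psi_I|_{-2}S=\psi_S$ from the proof of Proposition~\ref{prop: b(r) double zeroes} to write $\psi_I(it)=-t^{-2}\psi_S(i/t)$ and then read off from \eqref{eqn: psi fourier S} that $\psi_I(it)=O(t^{2}e^{-\pi/t})$ as $t\to 0$. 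Hence $\psi_I(it)-144-e^{2\pi t}$ stays bounded near $0$, and the integral converges absolutely on all of $\R_{\ge 0}$.

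Finally, to upgrade the identity from $r>\sqrt{2}$ to $r\ge 0$, I would argue by analytic continuation exactly as in the proof of Proposition~\ref{prop: a another integral}: $b$ is Schwartz, hence real-analytic on $[0,\infty)$; and the right-hand side of \eqref{eqn: b another integral} is also real-analytic there, because the apparent poles at $r=0$ and $r^2=2$ inside the bracket are simple and are annihilated by the double zero of $\sin(\pi r^2/2)^2$. Therefore both sides agree on a nonempty open subset of a connected real-analytic domain and must coincide on the whole of $[0,\infty)$. The main obstacle is the careful sign and contour bookkeeping together with the $t\to 0$ asymptotics of $\psi_I(it)$; once those are nailed down, the rest is mechanical.
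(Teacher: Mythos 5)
Your proposal mirrors the paper's proof of this proposition essentially step for step: substitute $z=it$ in \eqref{eqn: b double zeroes}, subtract from $\psi_I(it)$ the two growing terms $e^{2\pi t}$ and $144$ read off from \eqref{eqn: psi fourier I}, integrate those in closed form for $r>\sqrt{2}$, and then extend the resulting identity to all of $[0,\infty)$ by analyticity. The one genuine slip is the inference ``$b$ is Schwartz, hence real-analytic on $[0,\infty)$'': smoothness together with rapid decay does not imply real-analyticity (a flat bump is the standard counterexample). What the paper actually invokes, and what you should invoke, is that the contour-integral definition \eqref{eqn: b(r) definition} expresses $b$ as an absolutely and locally uniformly convergent integral of the entire function $r\mapsto e^{\pi i r^2 z}$ over fixed contours; differentiation under the integral sign then gives analyticity of $b$ in a complex neighbourhood of $[0,\infty)$ directly, independently of any Schwartz-class considerations. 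Your caution about the overall sign is also warranted: the $-4$ in \eqref{eqn: b double zeroes} combined with $dz=i\,dt$ yields $-4i$, not the $+4i$ that appears in the displayed formula, so you would need to trace that sign carefully against Proposition~\ref{prop: b values} and the definitions of $A$ and $B$ in the proof of Theorem~\ref{thm: g1} to fix the intended normalization.
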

\begin{proof}
The proof is analogous to the proof of Proposition~\ref{prop: a another integral}.
First, suppose that $r>\sqrt{2}$. Then by Proposition~\ref{prop: b(r) double zeroes}
$$b(r)=4i\,\sin(\pi r^2/2)^2\,\int\limits_{0}^{\infty}\psi_I(it)\,e^{-\pi r^2 t}\,dt. $$
From \eqref{eqn: psi fourier I} we obtain
\begin{equation}\label{eqn: psi asymptotic}
\psi_I(it)=e^{2\pi t}+144+O(e^{-\pi t})\quad\mbox{as}\;t\to\infty.
\end{equation}
For $r>\sqrt{2}$ we have
\begin{equation}
\int\limits_0^\infty \left(e^{2\pi t}+144\right)\,e^{-\pi r^2 t}\,dt
=\frac{1}{\pi\,(r^2-2)}+\frac{144}{\pi\,r^2}.\end{equation}
Therefore, the identity \eqref{eqn: a another integral} holds for $r>\sqrt{2}$.

On the other hand, from the definition \eqref{eqn: b(r) definition} we see that $b(r)$ is analytic in some neighborhood of $[0,\infty)$. The asymptotic expansion \eqref{eqn: psi asymptotic} implies that the right hand side of \eqref{eqn: b another integral} is also analytic in some neighborhood of $[0,\infty)$. Hence, the identity \eqref{eqn: b another integral} holds on the whole interval $[0,\infty)$. This finishes the proof of the proposition.
\end{proof}
We see from \eqref{eqn: b another integral} that $b(r)\in i\R$ far all $r\in\R_\geq{0}$. Another immediate corollary of this proposition is
\begin{proposition}\label{prop: b values}
We have
\begin{equation}\label{eqn: b values}
b(0)=0\qquad
b(\sqrt{2})=0\qquad
b^\prime(\sqrt{2})=2\sqrt{2}\,\pi\,i.
\end{equation}
\end{proposition}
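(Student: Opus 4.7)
The plan is to read off all three values directly from the integral representation
$$b(r)=4i\,\sin(\pi r^2/2)^2\,\Bigg(\frac{144}{\pi r^2}+\frac{1}{\pi(r^2-2)}+\int_0^\infty\bigl(\psi_I(it)-144-e^{2\pi t}\bigr)\,e^{-\pi r^2 t}\,dt\Bigg)$$
of Proposition~\ref{prop: b another integral}. Both $b(0)$ and $b(\sqrt{2})$ appear as indeterminate products of the form $0\cdot\infty$, and $b'(\sqrt{2})$ is extracted by expanding both factors to one further order at $r^2=2$. The main bookkeeping is to keep track of the leading orders of the $\sin^2$ factor at the two vanishing points and to match them against the simple pole coming from the relevant term in the bracket; the integral term is harmless because $\psi_I(it)-144-e^{2\pi t}=O(e^{-\pi t})$ as $t\to\infty$ and is bounded near $t=0$, so it defines an analytic function of $r^2$ in a neighborhood of $\{0,2\}$.

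For $b(0)$, note that $\sin(\pi r^2/2)^2\sim \pi^2 r^4/4$ as $r\to 0$, whereas the bracket has only a double pole $144/(\pi r^2)$ and is otherwise regular at the origin. Their product is therefore $O(r^2)$ and vanishes in the limit, giving $b(0)=0$.

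For $b(\sqrt{2})$ and $b'(\sqrt{2})$ I set $v=r^2-2$. Using $\sin(\pi r^2/2)=-\sin(\pi v/2)$ I get
$$\sin(\pi r^2/2)^2=\frac{\pi^2}{4}\,v^2+O(v^4),$$
while the bracket has the Laurent expansion
$$\frac{1}{\pi v}+F_0+F_1 v+O(v^2)$$
around $v=0$, where $F_0,F_1$ are the finite values at $r^2=2$ of $\frac{144}{\pi r^2}+\int_0^\infty(\cdots)\,e^{-\pi r^2 t}\,dt$ and of its derivative in $r^2$. Multiplying,
$$b(r)=4i\left(\frac{\pi^2}{4}v^2+O(v^4)\right)\left(\frac{1}{\pi v}+F_0+O(v)\right)=i\pi v+O(v^2).$$
This immediately gives $b(\sqrt{2})=0$ (as the expression is $O(v)=O(r^2-2)$), and differentiating with $\frac{d}{dr}=2r\frac{d}{d(r^2)}$ yields
$$b'(\sqrt{2})=2\sqrt{2}\cdot i\pi=2\sqrt{2}\,\pi\,i.$$

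The only step requiring any care is the justification that $F_0$ and $F_1$ are finite, i.e.\ that differentiation under the integral sign is permitted at $r^2=2$; this follows from the exponential decay of $\psi_I(it)-144-e^{2\pi t}$ at infinity, which makes the integral and all its derivatives in $r^2$ uniformly absolutely convergent on a neighborhood of $r^2=2$. No new analytic input beyond Proposition~\ref{prop: b another integral} is needed.
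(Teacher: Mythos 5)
Your proof is correct and follows exactly the route the paper intends: the paper simply calls Proposition~\ref{prop: b values} an ``immediate corollary'' of Proposition~\ref{prop: b another integral}, and you have supplied precisely the local expansions at $r^2=0$ and $r^2=2$ that make the word ``immediate'' honest. The key cancellations are right: the quartic vanishing of $\sin(\pi r^2/2)^2$ at $r=0$ overwhelms the double pole $144/(\pi r^2)$, and the quadratic vanishing of $\sin(\pi r^2/2)^2=\sin(\pi(r^2-2)/2)^2$ at $r^2=2$ is exactly one order more than the simple pole $\tfrac{1}{\pi(r^2-2)}$, leaving a simple zero with slope $i\pi$ in the variable $v=r^2-2$; the chain-rule factor $2r=2\sqrt{2}$ then gives $b'(\sqrt{2})=2\sqrt{2}\,\pi i$. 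The analyticity of the integral term near $r^2\in\{0,2\}$ that you invoke is already recorded in the paper's statement and proof of Proposition~\ref{prop: b another integral}, so no new input is needed.
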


\section{Proof of Theorem \ref{thm: g}}\label{sec: g}
Finally, we are ready to prove Theorem \ref{thm: g}.
\begin{theorem}\label{thm: g1}
The function
$$g(x):=\frac{\pi\,i}{8640}a(x)+\frac{i}{240\pi}\,b(x)$$
satisfies conditions \eqref{eqn: g1}--\eqref{eqn: g3}. Moreover, the values $g(x)$ and $\widehat{g}(x)$ do not vanish for all vectors $x$ with $\|x\|^2\notin 2\Z_{>0}$.
\end{theorem}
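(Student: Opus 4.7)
Condition~\eqref{eqn: g3} is immediate. Since $\widehat{a}=a$ and $\widehat{b}=-b$, we have $\widehat g = \frac{\pi i}{8640} a - \frac{i}{240\pi} b$, and the special values given by Propositions~\ref{prop: a values} and~\ref{prop: b values} yield
\[
g(0) \;=\; \widehat g(0) \;=\; \tfrac{\pi i}{8640}\cdot\tfrac{-8640\,i}{\pi} \;+\; 0 \;=\; 1,
\qquad g(\sqrt 2) \;=\; \widehat g(\sqrt 2) \;=\; 0.
\]

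For the sign conditions I would feed the integral representations of $a$ and $b$ into the linear combination. For~\eqref{eqn: g1} I use the clean formulas of Propositions~\ref{prop: a(r) double zeroes} and~\ref{prop: b(r) double zeroes}, valid for $r > \sqrt 2$: parametrizing $z = it$ they yield
\[
g(x) \;=\; -\sin^2\!\bigl(\tfrac{\pi r^2}{2}\bigr)\int_0^\infty\bigl(\tfrac{\pi}{2160}\,\phi_0(i/t)\,t^2 + \tfrac{1}{60\pi}\,\psi_I(it)\bigr)\,e^{-\pi r^2 t}\,dt,
\]
so~\eqref{eqn: g1} reduces to the pointwise inequality $\tfrac{\pi}{2160}\phi_0(i/t)t^2 + \tfrac{1}{60\pi}\psi_I(it)\geq 0$ for all $t > 0$. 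For~\eqref{eqn: g2} I use instead the ``all $r\geq 0$'' formulas of Propositions~\ref{prop: a another integral} and~\ref{prop: b another integral}, which carry additional rational-in-$r^2$ terms. A crucial cancellation occurs in $\widehat g$: the simple poles at $r^2 = 2$ from $a$ and $b$ cancel exactly, because $\tfrac{\pi}{2160}\cdot\tfrac{36}{\pi^3}=\tfrac{1}{60\pi}\cdot\tfrac{1}{\pi}=\tfrac{1}{60\pi^2}$, leaving
\[
\widehat g(x) \;=\; -\sin^2\!\bigl(\tfrac{\pi r^2}{2}\bigr)\!\left(-\tfrac{4}{\pi^2 r^4}+\tfrac{6}{\pi^2 r^2}+\int_0^\infty\bigl(\tfrac{\pi}{2160} M_a(t) - \tfrac{1}{60\pi} M_b(t)\bigr)\,e^{-\pi r^2 t}\,dt\right)
\]
for all $r \geq 0$, where $M_a$, $M_b$ denote the bracketed integrands of Propositions~\ref{prop: a another integral} and~\ref{prop: b another integral}. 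Condition~\eqref{eqn: g2} then becomes the assertion that the outer bracket is $\leq 0$ for all $r \geq 0$.

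\textbf{Main obstacle.} The core of the proof lies in these two pointwise inequalities in $t$. Neither $\phi_0(i/t)\,t^2$ nor $\psi_I(it)$ has a definite sign from its $q$-expansion alone, but the coefficients $\tfrac{\pi}{8640}$ and $\tfrac{1}{240\pi}$ are chosen so that the leading $e^{2\pi t}$ contributions at $t = \infty$ \emph{add} in the $g$-integrand (yielding $\sim\tfrac{1}{30\pi}e^{2\pi t}$, clearly positive) and \emph{cancel} in the $\widehat g$-integrand. The natural strategy is a three-regime split of $t \in (0,\infty)$: near $t = \infty$ use the $q$-expansions \eqref{eqn: phi fourier0}--\eqref{eqn: phi fourier4} and~\eqref{eqn: psi fourier I}; near $t = 0$ use the transformation formula~\eqref{eqn: phi0 transform} together with the $S$-action relating $\psi_I$ to $\psi_S$ via~\eqref{eqn: psi I define}--\eqref{eqn: psi S define} to re-expand at the cusp~$0$; and on a compact middle range, bound truncation errors using the effective Fourier-coefficient asymptotics~\eqref{eqn: j Fourier asymptotic}, \eqref{eqn: varphi Fourier asymptotic}. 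Finally, the non-vanishing claim for $\|x\|^2\notin 2\Z_{>0}$ follows because $\sin^2(\pi r^2/2)>0$ for $\|x\|\neq 0$ with $\|x\|^2\notin 2\Z_{\geq 0}$, and strict sign-definiteness of the integrand on an open set of $t$ forces the integral to be nonzero; the case $\|x\| = 0$ is covered by $g(0) = \widehat g(0) = 1$.
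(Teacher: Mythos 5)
Your plan matches the paper's in outline: verify \eqref{eqn: g3} from the special values of $a$ and $b$ at $r=0$, reduce the two sign conditions to Laplace-transform representations with a $\sin^2(\pi r^2/2)$ prefactor, and establish pointwise sign conditions on the integrand by truncating the $q$-expansions and controlling tails with the effective Fourier-coefficient bounds and interval arithmetic. Your handling of \eqref{eqn: g1} is exactly the paper's, up to normalization: your integrand is $-\tfrac{\pi}{2160}A(t)$ with $A(t)=-t^2\phi_0(i/t)-\tfrac{36}{\pi^2}\psi_I(it)$, whose leading term $-\tfrac{72}{\pi^2}e^{2\pi t}$ is your $\sim\tfrac{1}{30\pi}e^{2\pi t}$ rescaled.

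There is, however, a real gap in your treatment of \eqref{eqn: g2}. You correctly note that the $(r^2-2)^{-1}$ poles cancel in $\widehat g$, but you stop there, leaving the residual rational terms $-\tfrac{4}{\pi^2 r^4}+\tfrac{6}{\pi^2 r^2}$ outside the integral. The statement ``the outer bracket is $\leq 0$ for all $r\geq 0$'' is then still a condition in $r$ (and the rational part itself changes sign at $r^2=2/3$), not the pointwise-in-$t$ inequality that your ``Main obstacle'' paragraph promises and that an interval-arithmetic sweep over $t$ could certify. The missing observation is a second cancellation: one has $\tfrac{\pi}{2160}M_a(t)-\tfrac{1}{60\pi}M_b(t) = -\tfrac{\pi}{2160}B(t)+4t-\tfrac{6}{\pi}$ with $B(t):=-t^2\phi_0(i/t)+\tfrac{36}{\pi^2}\psi_I(it)$, and since $\int_0^\infty\bigl(4t-\tfrac{6}{\pi}\bigr)e^{-\pi r^2 t}\,dt=\tfrac{4}{\pi^2 r^4}-\tfrac{6}{\pi^2 r^2}$, the residual rational terms are absorbed exactly. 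This yields the paper's clean identity $\widehat g(r)=\tfrac{\pi}{2160}\sin^2(\pi r^2/2)\int_0^\infty B(t)e^{-\pi r^2 t}\,dt$, valid for all $r>0$ because the $e^{2\pi t}$ pieces of the two summands in $B$ cancel and $B$ grows only polynomially, and it reduces \eqref{eqn: g2} to the genuine pointwise inequality $B(t)>0$. The paper then proves $A(t)<0$ and $B(t)>0$ with a two-regime split at $t=1$ (not three), using the truncations $A_0^{(6)},A_\infty^{(6)},B_0^{(6)},B_\infty^{(6)}$ and the explicit tail bounds $R_0^{(6)},R_\infty^{(6)}$ obtained from \eqref{eqn: Fourier estimate 1}--\eqref{eqn: Fourier estimate 5}. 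One further caution: the double-zero representation you invoke for $g$'s non-vanishing converges only for $r>\sqrt2$, so the range $0<r<\sqrt2$ requires the representations of Propositions~\ref{prop: a another integral} and~\ref{prop: b another integral} and a separate sign check there.
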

\begin{proof}
First, we prove that \eqref{eqn: g1} holds. By Propositions~\ref{prop: a(r) double zeroes} and \ref{prop: b(r) double zeroes} we know that for $r>\sqrt{2}$
\begin{equation}\label{eqn: g A} g(r)=\frac{\pi}{2160}\,\sin(\pi r^2/2)^2\,\int\limits_0^\infty A(t)\,e^{-\pi r^2 t}\,dt\end{equation}
where $$A(t)=-t^2\phi_0(i/t)-\frac{36}{\pi^2}\,\psi_I(it).$$
Our goal is to show that $A(t)<0\quad\mbox{for}\;t\in(0,\infty).$ The function $A(t)$ is plotted in Figure~\ref{fig: A}.
\begin{figure}[h!]
\caption{Plot of the functions $A(t)$, $A^{(2)}_0(t)=-\frac{368640}{\pi^2}\,t^2\,e^{-\pi /t}$, and $A^{(1)}_\infty(t)=-\frac{72}{\pi^2}\,e^{2\pi t}+\frac{8640}{\pi}t-\frac{23328}{\pi^2}$.\label{fig: A}}
  \centering
\includegraphics[width=300 pt]{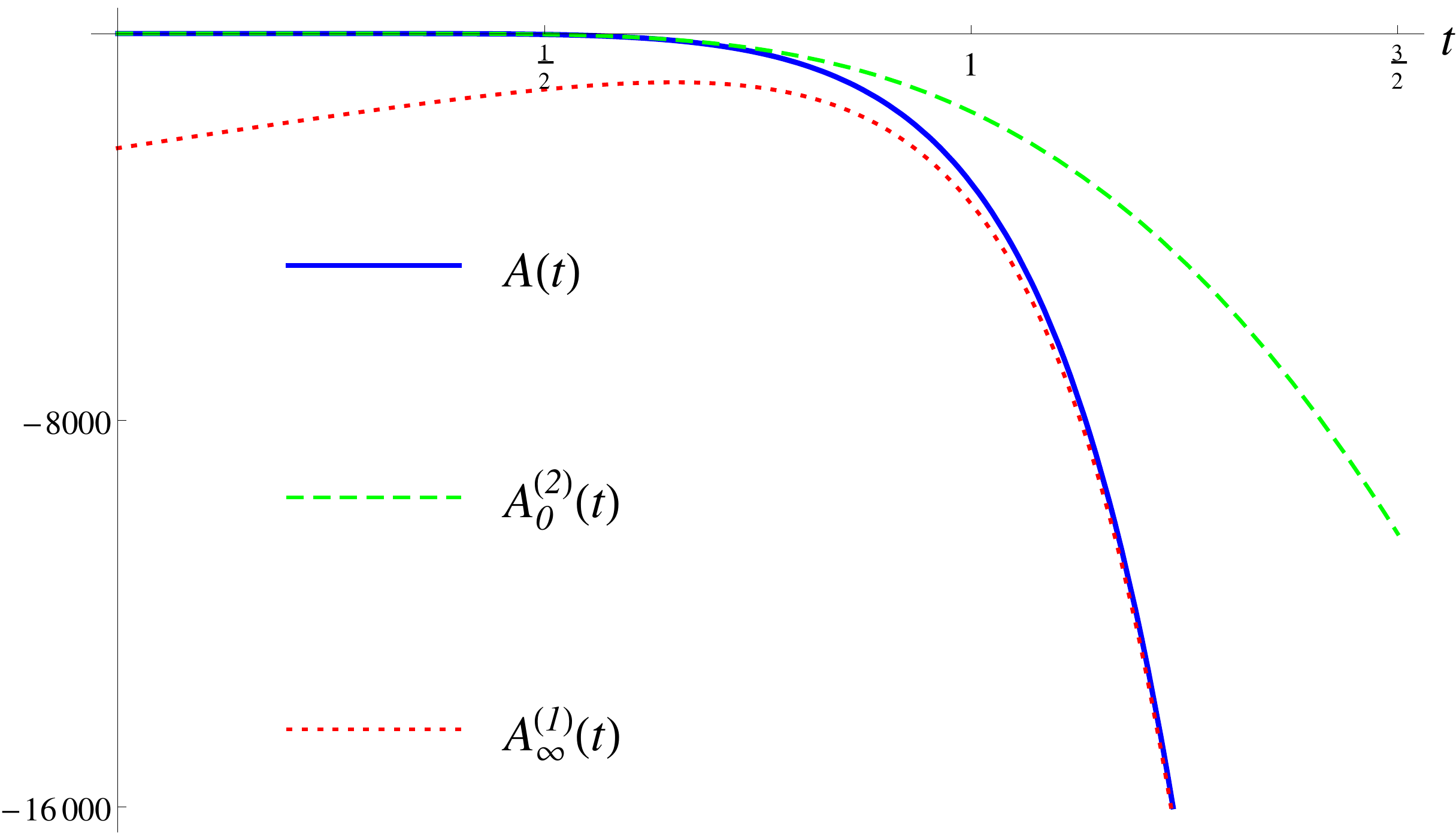}
\end{figure}

\noindent We observe that we can compute the values of $A(t)$ for $t\in(0,\infty)$ with any given precision. Indeed, from identities \eqref{eqn: phi0 transform} and \eqref{eqn: psi S define} we obtain the following two presentations for $A(t)$
\begin{align*}
 A(t)=&-t^2\phi_0(i/t)+\frac{36}{\pi^2}\,t^2\,\psi_S(i/t),\\
 A(t)=&-t^2\phi_0(it)+\frac{12}{\pi}\,t\,\phi_{-2}(it)-\frac{36}{\pi^2}\,\phi_{-4}(it)-\frac{36}{\pi^2}\,\psi_I(it).
\end{align*}
For an integer $n\geq0$ let $A_0^{(n)}$ and  $A_{\infty}^{(n)}$ be the functions such that
\begin{align}
 A(t)=&A_0^{(n)}(t)+O(t^2\,e^{-\pi n /t})\quad\mbox{as}\;t\to0,\label{eqn: A asymptotic expansion 0}\\
 A(t)=&A_\infty^{(n)}(t)+O(t^2\,e^{-\pi n t})\quad\mbox{as}\;t\to\infty.\label{eqn: A asymptotic expansion infty}
\end{align}
For each $n\geq 0$ we can compute these functions from the Fourier expansions \eqref{eqn: phi fourier0}--\eqref{eqn: phi fourier4}, \eqref{eqn: psi fourier I}, and \eqref{eqn: psi fourier S}.
 For example, from \eqref{eqn: phi fourier4}--\eqref{eqn: phi fourier0} and \eqref{eqn: psi fourier I} we compute
\begin{align*}A_\infty^{(6)}(t)=&\scriptstyle-\tfrac{72}{\pi ^2}\, e^{2 \pi  t}-\tfrac{23328}{\pi ^2}+\tfrac{184320}{\pi ^2}\, e^{-\pi  t}-\tfrac{5194368}{\pi ^2}\, e^{-2 \pi  t}+\tfrac{22560768}{\pi ^2}\, e^{-3 \pi  t}-\tfrac{250583040}{\pi
   ^2}\, e^{-4 \pi  t}+\tfrac{869916672 }{\pi ^2}\,e^{-5 \pi  t}\\&\scriptstyle+t(\tfrac{8640}{\pi }+\tfrac{2436480}{\pi }\, e^{-2 \pi  t}+\tfrac{113011200 }{\pi }\,e^{-4 \pi  t})-t^2(518400\,e^{-2 \pi  t}+31104000\,e^{-4 \pi  t}).
\end{align*}
From \eqref{eqn: phi fourier4}--\eqref{eqn: phi fourier0} and \eqref{eqn: psi fourier S} we compute
$$A_0^{(6)}(t)=t^2(-\tfrac{368640}{\pi ^2}\, e^{-\pi/t}-518400\, e^{-2\pi/t}-\tfrac{45121536}{\pi ^2}\, e^{-3\pi/t}-31104000\,e^{-4\pi/t}-\tfrac{1739833344}{\pi ^2}\, e^{-5\pi/t}).$$
Moreover, from the convergent asymptotic expansion for the Fourier coefficients of a weakly holomorphic modular form \cite[Proposition 1.12]{Bruinier} we find that the $n$-th Fourier coefficient $c_{\psi_I}(n)$ of $\psi_I$ satisfies
\begin{equation}\label{eqn: Fourier estimate 1}|c_{\psi_I}(n)|\leq e^{4\pi\sqrt{n}}\qquad n\in\tfrac 12 \Z_{>0}.\end{equation} Similar inequalities hold for the Fourier coefficients of $\psi_S$, $\phi_0$, $\phi_{-2}$, and $\phi_{-4}$:
\begin{align}\label{eqn: Fourier estimate 2}
&|c_{\psi_S}(n)|\leq 2e^{4\pi\sqrt{n}}\qquad n\in\tfrac 12 \Z_{>0} ,\\
&|c_{\phi_0}(n)|\leq 2e^{4\pi\sqrt{n}}\qquad n\in \Z_{>0} ,\\
&|c_{\phi_{-2}}(n)|\leq e^{4\pi\sqrt{n}}\qquad n\in  \Z_{>0},\\
&|c_{\phi_{-4}}(n)|\leq e^{4\pi\sqrt{n}}\qquad n\in \Z_{>0}. \label{eqn: Fourier estimate 5}
 \end{align}
Therefore, we can estimate the error terms in the asymptotic expansions \eqref{eqn: A asymptotic expansion 0} and \eqref{eqn: A asymptotic expansion infty} of $A(t)$
\begin{align*}
\left|A(t)-A_0^{(m)}(t)\right|\leq& (t^2+\frac{36}{\pi^2})\,\sum_{n=m}^\infty 2e^{2\sqrt{2}\pi\sqrt{n}}\,e^{-\pi n/t},\\
\left|A(t)-A_\infty^{(m)}(t)\right|\leq& (t^2+\frac{12}{\pi}\,t+\frac{36}{\pi^2})\,\sum_{n=m}^\infty 2e^{2\sqrt{2}\pi\sqrt{n}}\,e^{-\pi nt}.
\end{align*}
 For an integer $m\geq0$ we set
\begin{align*}
R^{(m)}_0:=&(t^2+\frac{36}{\pi^2})\,\sum_{n=m}^\infty 2e^{2\sqrt{2}\pi\sqrt{n}}\,e^{-\pi n/t},\\
R^{(m)}_\infty:=&(t^2+\frac{12}{\pi}\,t+\frac{36}{\pi^2})\,\sum_{n=m}^\infty 2e^{2\sqrt{2}\pi\sqrt{n}}\,e^{-\pi nt}.
\end{align*}
Using interval arithmetic we check that
\begin{align*}
&\left|R_0^{(6)}(t)\right|\leq\left|A_0^{(6)}(t)\right|\quad\mbox{ for }\;t\in(0,1],\\
&\left|R_\infty^{(6)}(t)\right|\leq\left|A_\infty^{(6)}(t)\right|\quad\mbox{ for }\;t\in[1,\infty),\\
&A_0^{(6)}(t)<0\quad\mbox{ for }\;t\in(0,1],\\
&A_\infty^{(6)}(t)<0\quad\mbox{ for }\;t\in[1,\infty).
\end{align*}
Thus, we see that $A(t)<0$ for $t\in (0,\infty)$. Then identity \eqref{eqn: g A} implies \eqref{eqn: g1}.

Next, we prove \eqref{eqn: g2}. By Propositions~\ref{prop: a another integral} and~\ref{prop: b another integral} we know that for $r>0$
\begin{equation}\label{eqn: g B} \widehat{g}(r)=\frac{\pi}{2160}\,\sin(\pi r^2/2)^2\,\int\limits_0^\infty B(t)\,e^{-\pi r^2 t}\,dt\end{equation}
where $$B(t)=-t^2\phi_0(i/t)+\frac{36}{\pi^2}\,\psi_I(it).$$
This function can also be written as
\begin{align*}
 B(t)=&-t^2\phi_0(i/t)-\frac{36}{\pi^2}\,t^2\,\psi_S(i/t),\\
 B(t)=&-t^2\phi_0(it)+\frac{12}{\pi}\,t\,\phi_{-2}(it)-\frac{36}{\pi^2}\,\phi_{-4}(it)+\frac{36}{\pi^2}\,\psi_I(it).
\end{align*}
Our aim is to prove that $B(t)>0$ for $t\in(0,\infty)$. A plot of $B(t)$ is given in Figure~\ref{fig:B}.
\begin{figure}[h!]
\caption{Plot of the functions $B(t)$, $B^{(2)}_0(t)=\frac{368640}{\pi^2}\,t^2\,e^{-\pi /t}$, and $B^{(1)}_\infty(t)=\frac{8640}{\pi}t-\frac{23328}{\pi^2}$.\label{fig:B}}
  \centering
\includegraphics[width=300 pt]{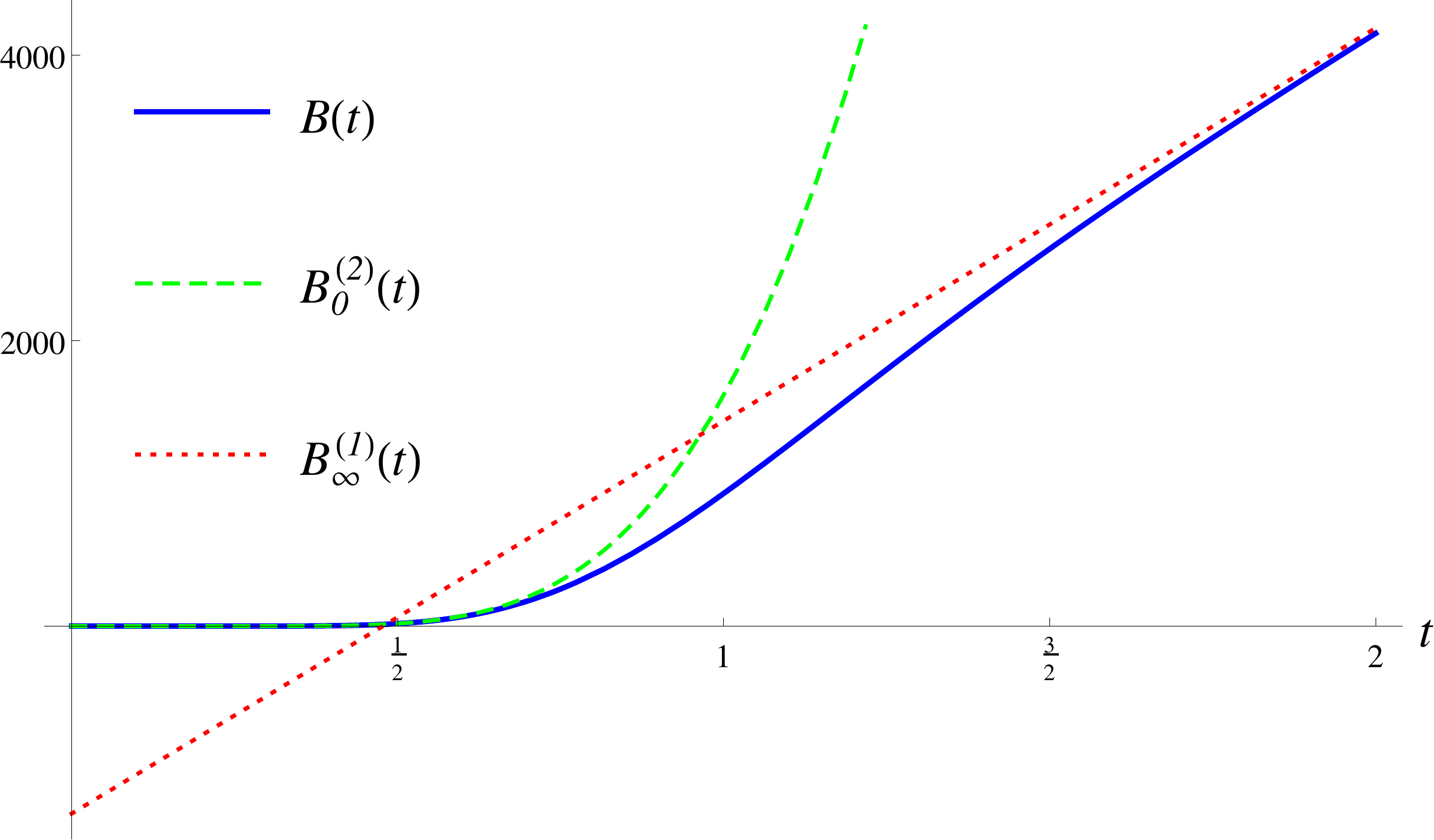}
\end{figure}

\noindent For $n\geq 0$ let $B_0^{(n)}$ and  $B_{\infty}^{(n)}$ be the functions  such that
\begin{align*}
 B(t)=&B_0^{(n)}(t)+O(t^2\,e^{-\pi n /t})\quad\mbox{as}\;t\to0,\\
 B(t)=&B_\infty^{(n)}(t)+O(t^2\,e^{-\pi n t})\quad\mbox{as}\;t\to\infty.
\end{align*}
We find
\begin{align*}B_\infty^{(6)}(t)=&-\tfrac{12960}{\pi ^2}-\tfrac{184320}{\pi ^2}\,\scriptstyle e^{-\pi  t}\displaystyle-\tfrac{116640}{\pi ^2} \,\scriptstyle e^{-2\pi  t}\displaystyle-\tfrac{22560768}{\pi ^2}\,\scriptstyle e^{-3\pi  t}\displaystyle+\tfrac{56540160}{\pi ^2}\,\scriptstyle e^{-4\pi  t}\displaystyle-\tfrac{869916672 }{\pi ^2} \,\scriptstyle e^{-5\pi  t}\displaystyle\\&+t(\tfrac{8640 }{\pi }+\tfrac{2436480}{\pi }\,\scriptstyle e^{-2\pi  t}\displaystyle+\tfrac{113011200 }{\pi }\,\scriptstyle e^{-4\pi  t}\textstyle)-t^2(\scriptstyle518400\,\scriptstyle e^{-2\pi  t}\displaystyle+\scriptstyle31104000\,\scriptstyle e^{-4\pi  t}\displaystyle)
\end{align*}
and
$$B_0^{(6)}(t)= t^2(\tfrac{368640}{\pi ^2}\, e^{-\pi/t}-518400\, e^{-2 \pi /t}+\tfrac{45121536 }{\pi ^2}\,e^{-3 \pi/t}-31104000\, e^{-4 \pi/t}+\tfrac{1739833344 }{\pi ^2}\,e^{-5 \pi/t}) .$$
The estimates \eqref{eqn: Fourier estimate 1}--\eqref{eqn: Fourier estimate 5} imply that $$\left|B(t)-B_0^{(6)}(t)\right|\leq R_0^{(6)}(t)\quad\mbox{for}\;t\in(0,1]$$
and
$$\left|B(t)-B_\infty^{(6)}(t)\right|\leq R_\infty^{(6)}(t)\quad\mbox{for}\;t\in[1,\infty).$$
Using interval arithmetic we verify that
\begin{align*}
&\left|R_0^{(6)}(t)\right|\leq\left|B_0^{(6)}(t)\right|\quad\mbox{ for }\;t\in(0,1],\\
&\left|R_\infty^{(6)}(t)\right|\leq\left|B_\infty^{(6)}(t)\right|\quad\mbox{ for }\;t\in[1,\infty),\\
&B_0^{(6)}(t)>0\quad\mbox{ for }\;t\in(0,1],\\
&B_\infty^{(6)}(t)>0\quad\mbox{ for }\;t\in[1,\infty).
\end{align*}
Now identity \eqref{eqn: g B} implies \eqref{eqn: g2}.

Finally, the property \eqref{eqn: g3} readily follows from Proposition~\ref{prop: a values} and Proposition~\ref{prop: b values}.
This finishes the proof of Theorems~\ref{thm: g1} and~\ref{thm: g}.
 \end{proof}

 \section*{Acknowledgments}
I  thank Andriy Bondarenko for sharing his ideas, for fruitful discussions, and for his support. Also I am grateful to Danilo Radchenko for his valuable ideas and his help with numerical computations. I am most grateful to J. Kramer, J. M. Sullivan, G. M. Ziegler , and anonymous referees for their valuable comments and suggestions on the manuscript.

{\footnotesize
\noindent
Berlin Mathematical School\\
Str. des 17. Juni 136\\
10623 Berlin\\
and\\
Humboldt University of Berlin\\
Rudower Chaussee 25\\
12489 Berlin\\
\textit{ Email address: viazovska@gmail.com}}

\end{document}